\documentclass[11pt]{article}
\typeout{11pt or 12pt recommended}
\renewcommand{\baselinestretch}{1.2}
\addtolength{\textwidth}{2.3cm} 
\addtolength{\oddsidemargin}{-1cm}
\addtolength{\textheight}{3cm}
\addtolength{\topmargin}{-1cm}
\newcommand{\dated}{\mbox{} \hfill {\small [{\tt \today}]}} \usepackage{amsmath,amssymb,amsfonts,diagrams}
%
%
\newenvironment{keywords}{\noindent\small {\it Keywords\/}:}{\vskip 4pt}
\newenvironment{classification}{\noindent\small 2010 {\it Mathematics Subject
Classification\/}:}{\vskip 12pt}

%
%

%
%

\newcommand{\comps}{{\mathbb C}}

\newcommand{\tensor}{\otimes}

\newcommand{\Tensor}{\hat{\otimes}}

\newcommand{\cstar}{{C^\ast}}

\newcommand{\id}{{\mathrm{id}}}

\newcommand{\VN}{\operatorname{VN}}
\newcommand{\PM}{\operatorname{PM}}

%
\usepackage{amsthm,enumerate}
\theoremstyle{plain}
\newtheorem{theorem}{Theorem}[section]
\newtheorem{lemma}[theorem]{Lemma}
\newtheorem{corollary}[theorem]{Corollary}
\newtheorem{proposition}[theorem]{Proposition}
\theoremstyle{definition}
\newtheorem{definition}[theorem]{Definition}
\theoremstyle{remark}
\newtheorem*{remark}{Remark}
\newtheorem*{example}{Example}
\newtheorem*{rems}{Remarks}
\newtheorem*{exs}{Examples}
\newenvironment{remarks}{\begin{rems}\begin{enumerate}}{\end{enumerate}\end{rems}}
\newenvironment{examples}{\begin{exs}\begin{enumerate}}{\end{enumerate}\end{exs}}
\newenvironment{items}{\begin{enumerate}[\rm (i)]}{\end{enumerate}}
\newenvironment{alphitems}{\begin{enumerate}[\rm (a)]}{\end{enumerate}}

\newcommand{\PF}{\mathrm{PF}}

\title{Beurling--Fig\`a-Talamanca--Herz algebras}
\author{\textit{Serap \"Oztop}\thanks{Research supported by the Scientific Projects Coordination Unit of the University of Istanbul under Project Number IRP 11488.} \and \textit{Volker Runde}\thanks{Research supported by NSERC.} \and \textit{Nico Spronk}\thanks{Research supported by NSERC.}}
\date{}
\begin{document}
\maketitle
\begin{abstract}
For a locally compact group $G$ and $p \in (1,\infty)$, we define and study the Beurling--Fig\`a-Talamanca--Herz algebras $A_p(G,\omega)$. For $p=2$ and abelian $G$, these are precisely the Beurling algebras on the dual group $\hat{G}$. For $p =2$ and compact $G$, our approach subsumes an earlier one by H.\ H.\ Lee and E.\ Samei. The key to our approach is not to define Beurling algebras through weights, i.e., possibly unbounded continuous functions, but rather through their inverses, which are bounded continuous functions. We prove that a locally compact group $G$ is amenable if and only if one---and, equivalently, every---Beurling--Fig\`a-Talamanca--Herz algebra $A_p(G,\omega)$ has a bounded approximate identity.
\end{abstract}
\begin{keywords} 
amenable, locally compact group; Beurling algebra; Beurling--Fig\`a--Talamanca--Herz algebra; Beurling--Fourier algebra; bounded approximate identity; inverse of a weight; Leptin's theorem; weight.
\end{keywords}
\begin{sloppy}
\begin{classification}
Primary 43A99; Secondary 22D12, 43A15, 43A32, 46H05, 46J10.
\end{classification}
\end{sloppy}
\section*{Introduction}
A \emph{weight} on a locally compact group $G$ is a measurable, locally integrable function $\omega \!: G \to [1,\infty)$ such that
\begin{equation} \label{submult}
  \omega(xy) \leq \omega(x) \omega(y) \qquad (x,y \in G).
\end{equation}
The corresponding \emph{Beurling algebra} (\cite[Definition 3.7.2]{RSt}) is defined as
\[
  L^1(G,\omega) := \{ f \in L^1(G) : \omega f \in L^1(G) \}.
\]
It is a subalgebra of $L^1(G)$ and a Banach algebra in its own right with respect to the norm $\| \cdot \|_\omega$ given by $\| f \|_\omega := \| \omega f \|_1$ for $f \in L^1(G, \omega)$. There is no loss of generality if we suppose that $\omega$ is continuous (\cite[Theorem 3.7.5]{RSt}). Beurling algebras have been objects of study in abstract harmonic analysis for a long time, especially for abelian $G$ (see \cite{Kan} and \cite{RSt}, for instance).
\par 
If $G$ is abelian with dual group $\hat{G}$, then the Fourier transform is an isometric isomorphism between $L^1(G)$ and the Fourier algebra $A(\hat{G})$ of $\hat{G}$. Consequently, if $\omega$ is any weight on $G$, then $L^1(G,\omega)$ is isomorphic to a subalgebra of $A(\hat{G})$. In \cite{Eym}, P.\ Eymard defined the Fourier algebra $A(G)$ for general, not necessarily abelian, locally compact groups $G$. This brings up the natural question if there is a way to define certain subalgebras of $A(G)$, which, for abelian $G$, correspond to the Beurling algebras on $L^1(\hat{G})$.
\par 
In \cite{LS}, H.-H.\ Lee and E.\ Samei introduced the notion of a Beurling--Fourier algebra. If $G$ is a locally compact group and $\omega \!: G \to [1,\infty)$ is a weight, then multiplication with $\omega$ defines a closed, densely defined operator on $L^2(G)$, which is bounded if and only if $\omega$ is bounded, i.e., $L^1(G,\omega)$ is trivial. Consequently, Lee and Samei define what they call a \emph{weight on the dual of $G$} as a closed, densely defined operator on $L^2(G)$ affiliated with the group von Neumann algebra $\VN(G)$. The resulting theory of Beurling--Fourier algebras is particularly tractable for what Lee and Samei call central weights on the duals of compact groups. Independently, these weights and their corresponding Beurling--Fourier algebras were also introduced and investigated by J.\ Ludwig, L.\ Turowska, and the third-named author (\cite{LST}).
\par 
The approach in \cite{LST} is restricted to compact groups, and both in \cite{LS} and \cite{LST}, it is unclear if the given definitions of a Beurling--Fourier algebra can be extended beyond the $L^2$-context to define weighted variants of the Fig\`a-Talamanca--Herz algebras (see \cite{Eym2}, \cite{FT}, \cite{Her1}, \cite{Her2}, and \cite{Spe}). In the present note, we propose a different approach to Beurling--Fourier algebras with the following features:
\begin{itemize}
\item if $G$ is a locally compact abelian group with dual group $\hat{G}$, then the Beurling--Fourier algebras correspond---via the Fourier transform---to the Beurling algebras on $\hat{G}$;
\item at least for compact $G$, our approach subsumes the one from \cite{LS} (and thus of \cite{LST});
\item the definitions extend effortlessly from the $L^2$-framework to a general $L^p$-context with $p \in (1,\infty)$, which enables us to define \emph{Beurling--Fig\`a-Talamanca--Herz algebras}.
\end{itemize}
\par 
The key idea is to not attempt to define a ``dual'' notion of weight, but rather that of the inverse of a weight. This approach enables us to define Beurling--Fourier algebras without any reference to the theory of von Neumann algebras, on which \cite{LS} relies heavily, so that it can be adapted to an $L^p$-context. 
\par 
For the resulting Beurling--Fig\`a-Talamanca--Herz algebras, we obtain an extension of the Leptin--Herz theorem, which characterizes the amenable locally compact groups through the existence of bounded approximate identities in their Fig\`a-Talamanca--Herz algebras: a locally compact group is amenable if and only if one---or, equivalently, every---of its Beurling--Fig\`a-Talamanca--Herz algebras has a bounded approximate identity. 
\section{Beurling algebras through inverses of weights}
We shall suppose throughout that all weights are continuous: by \cite[Theorem 3.7.5]{RSt}, this is no limitation if one is only interested in the corresponding Beurling algebras.
\par
If $G$ is a locally compact group and $\omega \!: G \to [1,\infty)$ is a weight, then $\omega$ is bounded if and only if $L^1(G,\omega) = L^1(G)$ with an equivalent norm, i.e., unless $L^1(G,\omega)$ is trivial, the multiplication operator induces by $\omega$ on $L^2(G)$ is unbounded. The inverse of $\omega$---with respect to pointwise multiplication---, however, is bounded on $G$, i.e., the corresponding multiplication operator on $L^2(G)$ is bounded and thus lies in the multiplier algebra of $\mathcal{C}_0(G)$, the $\cstar$-algebra of all continuous functions on $G$ vanishing at infinity (represented on $L^2(G)$ as multiplication operators).
\par
For a locally compact group $G$, we denote by $\mathcal{C}_b(G)$ the $\cstar$-algebra of all bounded continuous functions on $G$. We note the following:
\begin{proposition} \label{inverseweightprop}
Let $G$ be a locally compact group. Then the following are equivalent for non-negative $\alpha \in \mathcal{C}_b(G)$ with $\| \alpha \|_\infty \leq 1$:
\begin{items}
\item there is a weight $\omega \!: G \to [1,\infty)$ such that $\alpha = \omega^{-1}$;
\item \begin{alphitems}
\item the map
\begin{equation}
  \mathcal{C}_0(G) \to \mathcal{C}_0(G), \quad f \mapsto \alpha f
\end{equation}
has dense range;
\item there is $\Omega \in L^\infty(G \times G)$ with $\| \Omega \|_\infty \leq 1$ such that
\begin{equation} \label{bigO}
  \alpha(x) \alpha(y) = \alpha(xy) \Omega(x,y) \qquad (x,y \in G).
\end{equation}
\end{alphitems}
\end{items}
Moreover, if $\omega$ is as in \emph{(i)}, then
\[
  L^1(G,\omega) = \{ \alpha f : f \in L^1(G) \}
\]
and
\[
  \| \alpha f \|_\omega = \| f \|_1 \qquad (f \in L^1(G)).
\]
\end{proposition}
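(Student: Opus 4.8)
The plan is to prove the two implications (i)\,$\Rightarrow$\,(ii) and (ii)\,$\Rightarrow$\,(i) and then the ``Moreover'' clause; throughout, the operative map is multiplication by $\alpha$, and the weight one attaches to $\alpha$ is its pointwise inverse $\omega := \alpha^{-1}$. For (i)\,$\Rightarrow$\,(ii), start from a weight $\omega$ with $\alpha = \omega^{-1}$; continuity of $\omega$ and $\omega \ge 1$ make $\alpha$ continuous with $0 < \alpha \le 1$ everywhere. For (a), observe that for $g \in \mathcal{C}_c(G)$ the function $\omega g$ again lies in $\mathcal{C}_c(G) \subseteq \mathcal{C}_0(G)$ and $\alpha \cdot (\omega g) = g$, so the range of $f \mapsto \alpha f$ contains the dense subspace $\mathcal{C}_c(G)$. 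For (b), put $\Omega(x,y) := \alpha(x)\alpha(y)/\alpha(xy)$; this is continuous on $G \times G$, and submultiplicativity of $\omega$ together with $\omega \ge 1$ gives $0 < \Omega \le 1$, so $\Omega$ lies in the unit ball of $L^\infty(G \times G)$ and \eqref{bigO} holds by construction.

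For (ii)\,$\Rightarrow$\,(i), I would first extract from (a) that $\alpha$ vanishes nowhere: if $\alpha(x_0) = 0$, then every element of the range of $f \mapsto \alpha f$ vanishes at $x_0$, so the range is contained in the proper closed subspace $\{ g \in \mathcal{C}_0(G) : g(x_0) = 0 \}$, contradicting density. Since $\alpha \ge 0$ and $\|\alpha\|_\infty \le 1$, this forces $0 < \alpha \le 1$ everywhere, so $\omega := \alpha^{-1}$ is a well-defined continuous function $G \to [1,\infty)$, hence locally integrable with respect to Haar measure. It remains to verify submultiplicativity. The function $\beta(x,y) := \alpha(x)\alpha(y)/\alpha(xy)$ is now genuinely continuous on $G \times G$, and by \eqref{bigO} it agrees almost everywhere with $\Omega$; since $\|\Omega\|_\infty \le 1$ and Haar measure on $G \times G$ has full support, continuity forces $\beta \le 1$ at \emph{every} point, i.e.\ $\alpha(x)\alpha(y) \le \alpha(xy)$, i.e.\ $\omega(xy) \le \omega(x)\omega(y)$. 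Thus $\omega$ is a weight with $\omega^{-1} = \alpha$.

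For the ``Moreover'' clause, note that for $f \in L^1(G)$ the bound $|\alpha f| \le |f|$ gives $\alpha f \in L^1(G)$, while $\omega \cdot (\alpha f) = f \in L^1(G)$ gives $\alpha f \in L^1(G,\omega)$ with $\|\alpha f\|_\omega = \|\omega\,\alpha f\|_1 = \|f\|_1$; conversely, if $h \in L^1(G,\omega)$ then $f := \omega h \in L^1(G)$ and $\alpha f = h$. Hence $f \mapsto \alpha f$ is an isometric bijection of $L^1(G)$ onto $L^1(G,\omega)$ (injectivity because $\alpha > 0$ everywhere), which is exactly the claim. The one step I expect to require real care --- as opposed to routine verification --- is the measure-theoretic passage in (ii)\,$\Rightarrow$\,(i) from the merely essentially bounded $\Omega$ to the pointwise inequality $\alpha(x)\alpha(y) \le \alpha(xy)$: it is precisely here that condition (a) is indispensable, since ruling out zeros of $\alpha$ is what turns $\beta$ into an honest continuous function to which the full-support argument applies. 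Everything else is bookkeeping.
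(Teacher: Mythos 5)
Your proof is correct, and its skeleton coincides with the paper's: the same $\Omega(x,y)=\omega(xy)/(\omega(x)\omega(y))$ for (i)$\Rightarrow$(ii)(b), the same deduction of nonvanishing of $\alpha$ from the density condition, and the same definition $\omega:=\alpha^{-1}$. The one place where you take a genuinely different route is the density claim (ii)(a): the paper observes that $\{\alpha f : f\in\mathcal{C}_0(G)\}$ is a self-adjoint subalgebra of $\mathcal{C}_0(G)$ that strongly separates points and invokes the Stone--Weierstra{\ss} theorem, whereas you note directly that for compactly supported continuous $g$ one has $g=\alpha\cdot(\omega g)$ with $\omega g$ again compactly supported, so the range already contains the dense subspace of compactly supported functions. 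Your argument is more elementary and in fact yields slightly more (exact surjectivity onto $\mathcal{C}_c(G)$ rather than mere density of a subalgebra). You are also more careful than the paper on two points it dismisses as immediate: the passage from the essential bound $\|\Omega\|_\infty\leq 1$ to the everywhere inequality $\alpha(x)\alpha(y)\leq\alpha(xy)$, which you correctly settle via continuity of $\beta$ and the full support of Haar measure on $G\times G$, and the ``moreover'' clause, which the paper declares obvious and you verify. No gaps.
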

\begin{proof}
(i) $\Longrightarrow$ (ii): Set
\[
  \Omega(x,y) := \frac{\omega(xy)}{\omega(x) \omega(y)} \qquad (x,y \in G).
\]
From (\ref{submult}), it is immediate that $\Omega \in \mathcal{C}_b(G \times G) \subset L^\infty(G \times G)$ with $\| \Omega \|_\infty \leq 1$, and by definition, (\ref{bigO}) holds. Hence, (a) is satisfied. To see that (b) holds, note that $\{ \alpha f : f \in \mathcal{C}_0(G) \}$ is a self-adjoint subalgebra of $\mathcal{C}_0(G)$ that strongly separates the points of $G$; it is therefore dense in $\mathcal{C}_0(G)$ by the Stone--Weierstra{\ss} theorem.
\par 
(ii) $\Longrightarrow$ (i): From (b), it is immediate that $\alpha (x) \neq 0$ for all $x \in G$. Hence, we can define $\omega := \alpha^{-1}$. As $\| \alpha \|_\infty \leq 1$, it is clear that $\omega(G) \subset [1,\infty)$. From (a), it follows that $\omega$ satisfies (\ref{submult}).
\par 
The ``moreover'' part is obvious.
\end{proof}
\par 
The bottom line of Proposition \ref{inverseweightprop} is that Beurling algebras can be defined without any reference to a weight---a possibly unbounded continuous function---, but rather through the inverses of weights, which are bounded continuous functions, i.e., multipliers of $\mathcal{C}_0(G)$.
\par
To adapt the notion of the inverse of a weight to context of Fourier algebras, we introduce the notion of a Hopf--von Neumann algebra (see \cite{ES}). As is customary, we write $\bar{\tensor}$ for the tensor product of von Neumann algebras.
\begin{definition}
A \emph{Hopf--von Neumann algebra} is a pair $(M,\Gamma)$ where $M$ is a von Neumann algebra and $\Gamma \!: M \to M \bar{\tensor} M$ is a \emph{co-multiplication}, i.e., a normal, faithful, unital $^\ast$-homomorphism such that
\[
  (\Gamma \tensor \id) \circ \Gamma = (\id \tensor \Gamma) \circ \Gamma.
\]
\end{definition}
\par 
Whenever $(M,\Gamma)$ is a Hopf--von Neumann algebra, the unique predual $M_\ast$ of $M$ becomes a Banach algebra with respect to the product $\ast$ defined via
\begin{equation} \label{predualprod}
  \langle f \ast g, x \rangle := \langle f \tensor g, \Gamma x \rangle \qquad (f, g \in M_\ast, \, x \in M).
\end{equation}
If $M_\ast$ is equipped with its canonical operator space structure (see \cite{ER} for background on the theory of operator spaces), then (\ref{predualprod}) defines not only a contractive, but completely contractive bilinear map, thus turning $M_\ast$ into a completely contractive Banach algebra (see \cite[p.\ 308]{ER}).
\begin{example}
Let $G$ be a locally compact group, and $M = L^\infty(G)$---so that $M_\ast = L^1(G)$ and $L^\infty(G) \bar{\tensor} L^\infty(G) \cong L^\infty(G \times G)$---, and define $\Gamma \!: L^\infty(G) \to L^\infty(G \times G)$ through
\[
  (\Gamma \phi)(x,y) := \phi(xy) \qquad (\phi \in L^\infty(G), \, x,y \in G).
\]
It is easy to check that the product on $L^1(G)$ in the sense of (\ref{predualprod}) is just the ordinary convolution product on $L^1(G)$.
\end{example}
\par 
The first part of Proposition \ref{inverseweightprop} can thus be rephrased as:
\begin{corollary} \label{inverseweightcor}
Let $G$ be a locally compact group. Then the following are equivalent for non-negative $\alpha \in \mathcal{C}_b(G)$ with $\| \alpha \|_\infty \leq 1$:
\begin{items}
\item there is a weight $\omega \!: G \to [1,\infty)$ such that $\alpha = \omega^{-1}$;
\item \begin{alphitems}
\item the map
\begin{equation}
  \mathcal{C}_0(G) \to \mathcal{C}_0(G), \quad f \mapsto \alpha f
\end{equation}
has dense range;
\item there is $\Omega \in L^\infty(G \times G)$ with $\| \Omega \|_\infty \leq 1$ such that
\[
  \alpha \tensor \alpha = (\Gamma \alpha)\Omega.
\]
\end{alphitems}
\end{items}
\end{corollary}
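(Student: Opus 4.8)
The plan is simply to translate condition (ii)(b) of Proposition \ref{inverseweightprop} into the language of the Hopf--von Neumann algebra $(L^\infty(G),\Gamma)$ described in the Example, since condition (ii)(a) is verbatim the same in both statements. First I would record that, under the canonical identification $L^\infty(G) \bar{\tensor} L^\infty(G) \cong L^\infty(G \times G)$, an elementary tensor $\phi \tensor \psi$ with $\phi,\psi \in L^\infty(G)$ corresponds to the function $(x,y) \mapsto \phi(x)\psi(y)$; in particular $\alpha \tensor \alpha$ is the function $(x,y) \mapsto \alpha(x)\alpha(y)$. Likewise, by the very definition of $\Gamma$ in the Example, $\Gamma \alpha \in L^\infty(G \times G)$ is the function $(x,y) \mapsto \alpha(xy)$.

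With these identifications in hand, the pointwise product $(\Gamma\alpha)\Omega$ in the commutative von Neumann algebra $L^\infty(G\times G)$ is exactly the function $(x,y) \mapsto \alpha(xy)\Omega(x,y)$, so the equation $\alpha \tensor \alpha = (\Gamma\alpha)\Omega$ is nothing but equation (\ref{bigO}). Consequently condition (ii) of the present corollary is equivalent to condition (ii) of Proposition \ref{inverseweightprop}, and the corollary follows at once from that proposition. The norm bound $\| \Omega \|_\infty \leq 1$ transfers unchanged between the two formulations.

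There is essentially no obstacle here beyond bookkeeping; the only point that needs a word of care is that, in the corollary, $\Omega$ a priori only lives in $L^\infty(G\times G)$, so the rewritten identity $\alpha \tensor \alpha = (\Gamma\alpha)\Omega$ is an identity of essentially bounded functions rather than of continuous ones. This is harmless: the function $\alpha \tensor \alpha$ is continuous, and (as observed in the proof of Proposition \ref{inverseweightprop}) once (ii)(a) holds $\alpha$ is nowhere zero, so $\Gamma\alpha$ is continuous and bounded away from zero on compact sets; hence $\Omega = (\alpha\tensor\alpha)/(\Gamma\alpha)$ automatically has a continuous representative and one recovers (\ref{bigO}) in the everywhere sense. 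Conversely, the continuous $\Omega$ produced in the proof of Proposition \ref{inverseweightprop} is in particular an element of $L^\infty(G\times G)$, so nothing is lost going the other way.
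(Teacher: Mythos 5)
Your argument is correct and is exactly what the paper intends: the paper offers no separate proof of Corollary \ref{inverseweightcor}, merely asserting that it is a rephrasing of Proposition \ref{inverseweightprop} via the identifications $\alpha \tensor \alpha \leftrightarrow \alpha(x)\alpha(y)$ and $\Gamma\alpha \leftrightarrow \alpha(xy)$, which is precisely the translation you carry out. Your extra remark reconciling the almost-everywhere identity in $L^\infty(G\times G)$ with the everywhere identity (\ref{bigO}) --- using that (ii)(a) forces $\alpha$ to be nowhere zero and that Haar measure has full support --- is a small but legitimate point of care that the paper leaves implicit.
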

\section{Weight inverses and Beurling--Fourier algebras}
Let $G$ be a locally compact group, and let $\lambda \!: G \to \mathcal{B}(L^2(G))$ be the left regular representation of $G$ on $L^2(G)$, i.e.,
\[
  (\lambda(x) \xi)(y) := \xi(x^{-1}y) \qquad (\xi \in L^2(G), \, x,y \in G).
\]
Through integration, $\lambda$ ``extends'' to a $^\ast$-representation of the group algebra $L^1(G)$; we use the symbol $\lambda$ for it as well. We define
\[
  C^\ast_r(G) := \overline{\lambda(L^1(G))}^{\| \cdot \|} \qquad\text{and}\qquad \VN(G) := \overline{\lambda(L^1(G))}^{\text{weak$^\ast$}},
\]
the \emph{reduced group $\cstar$-algebra} and the \emph{group von Neumann algebra} of $G$, respectively. The \emph{Fourier algebra} $A(G)$ of $G$ is the predual of $\VN(G)$ (see \cite{Eym}).
\par 
We introduce a co-multiplication $\hat{\Gamma} \!: \VN(G) \to \VN(G) \bar{\tensor} \VN(G) \cong \VN(G \times G)$, thus turning $A(G)$ into a completely contractive Banach algebra. To this end, define $W \in \mathcal{B}(L^2(G \times G))$ via
\[
  (W \boldsymbol{\xi})(x,y) := \boldsymbol{\xi}(x,xy) \qquad (\boldsymbol{\xi} \in L^2(G \times G), \, x,y \in G). 
\]
Then
\[
  \hat{\Gamma} \!: \mathcal{B}(L^2(G)) \to \mathcal{B}(L^2(G \times G)), \quad T \mapsto W^{-1}(T \tensor 1)W
\]
is a co-multiplication, satisfying
\[
  \hat{\Gamma} \lambda(x) = \lambda(x) \tensor \lambda(x) \qquad (x \in G);
\]
it follows that $\hat{\Gamma} \VN(G) \subset \VN(G \times G)$. Let the product on $A(G)$ induced by $\hat{\Gamma}$ be denoted by $\hat{\ast}$. Given $f,g \in A(G)$ and $x \in G$, we have
\[
  \langle f \hat{\ast} g, \lambda(x) \rangle = \langle f \tensor g, \hat{\Gamma} \lambda(x) \rangle = \langle f \tensor g, \lambda(x) \tensor \lambda(x) \rangle = f(x) g(x),
\]
i.e., $\hat{\ast}$ is pointwise multiplication.
\par 
Whenever $M$ is a von Neumann algebra, its predual $M_\ast$ is an $M$-bimodule in a canonical manner:
\[
  \langle x, y f \rangle := \langle xy,f \rangle = \langle y, fx \rangle \qquad (f \in M_\ast, \, x, y \in M).
\]
Also, if $A$ is a $\cstar$-algebra, we write $\mathcal{M}(A)$ for its \emph{multiplier algebra}.
\par 
With an eye on Corollary \ref{inverseweightcor}, we define:
\begin{definition} \label{weightinversedef}
Let $G$ be a locally compact group $G$. A \emph{weight inverse} is an element $\omega^{-1}$ of $\mathcal{M}(C^\ast_r(G))$ with $\| \omega^{-1} \| \leq 1$  such that the following are satisfied:
\begin{alphitems}
\item the maps
\begin{equation} \label{denserange1}
  C^\ast_r(G) \to C^\ast_r(G), \quad x \mapsto x \omega^{-1}
\end{equation}
and
\begin{equation} \label{denserange2}
  C^\ast_r(G) \to C^\ast_r(G), \quad x \mapsto \omega^{-1} x
\end{equation}
have dense range;
\item there is $\Omega \in \VN(G \times G)$ with $\| \Omega \| \leq 1$ such that 
\[
  \omega^{-1} \tensor \omega^{-1} = (\hat{\Gamma} \omega^{-1})\Omega;
\]
\end{alphitems}
The corresponding \emph{Beurling--Fourier algebra} is defined as
\[
  A(G,\omega) := \{ \omega^{-1} f : f \in A(G) \}.
\]
\end{definition}
\begin{remarks}
\item We have not defined what $\omega$ is: the $\omega$ in $A(G,\omega)$ is thus purely symbolic. However, a simple Hahn--Banach argument shows that $\omega^{-1} \!: L^2(G) \to L^2(G)$ is injective with dense range (as is $(\omega^{-1})^\ast$). We can thus define $\omega \!: \omega^{-1}L^2(G) \to L^2(G)$ as the inverse of $\omega^{-1} \!: L^2(G) \to \omega^{-1} L^2(G)$. It is immediate (\cite[Proposition II.6.2]{Yos}) that $\omega$ is closable and thus extends to a  closed (necessarily densely defined) operator on $L^2(G)$.
\item If $\omega^{-1}$ is self-adjoint, then it is sufficient that one of (\ref{denserange1}) or (\ref{denserange2}) have dense range.
\end{remarks}
\par
At the first glance, it may seem bewildering that we do not require weight inverses to be \emph{positive} elements of $\mathcal{M}(C^\ast_r(G))$. The reason for this is that we are interested in later extending Definition \ref{weightinversedef} to an $L^p$-context for general $p \in (1,\infty)$, where there is no suitable notion of positivity available. Still, not requiring in Definition \ref{weightinversedef} that $\omega^{-1}$ be positive, does not yield any more Beurling--Fourier algebras, as the next proposition shows:
\begin{proposition} \label{posweight}
Let $G$ be a locally compact group, and let $\omega^{-1} \in \mathcal{M}(C^\ast_r(G))$ be a weight inverse. Then $|(\omega^{-1})^\ast | \in \mathcal{M}(C^\ast_r(G))$ is also a weight inverse such that the corresponding Beurling--Fourier algebra coincides with $A(G,\omega)$.
\end{proposition}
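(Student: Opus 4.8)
The plan is to put $T := \omega^{-1}$, which we may regard as an element of $\VN(G)$ since $\mathcal{M}(C^\ast_r(G)) \subseteq \VN(G)$, and to set $\beta := |(\omega^{-1})^\ast| = (TT^\ast)^{1/2}$. Because $\mathcal{M}(C^\ast_r(G))$ is a $\cstar$-algebra, $TT^\ast$ and hence its square root $\beta$ lie in $\mathcal{M}(C^\ast_r(G))$; moreover $\beta \geq 0$, $\beta^\ast = \beta$ and $\|\beta\| = \|T^\ast\| = \|T\| \leq 1$. By the first of the remarks following Definition \ref{weightinversedef}, $T$ and $T^\ast$ are injective with dense range; hence so is $\beta$, and therefore the partial isometry $V \in \VN(G)$ occurring in the polar decomposition $T^\ast = V\beta$ is in fact unitary. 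Taking adjoints gives $T = \beta V^\ast$ and consequently $\beta = TV$. It then remains to verify conditions (a) and (b) of Definition \ref{weightinversedef} for $\beta$ and to compare the two Beurling--Fourier algebras.

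For (a): from (\ref{denserange2}) for $\omega^{-1}$, passing to adjoints, $\overline{C^\ast_r(G)\,T^\ast} = C^\ast_r(G)$. Multiplying $\overline{C^\ast_r(G)\, T} = C^\ast_r(G)$ (which is (\ref{denserange1}) for $\omega^{-1}$) on the right by $T^\ast$ yields
\[
  C^\ast_r(G) = \overline{C^\ast_r(G)\,T^\ast} \subseteq \overline{C^\ast_r(G)\,TT^\ast} = \overline{C^\ast_r(G)\,\beta^2} \subseteq \overline{C^\ast_r(G)\,\beta} \subseteq C^\ast_r(G),
\]
the penultimate inclusion using $x\beta^2 = (x\beta)\beta$ and the last one using $\beta \in \mathcal{M}(C^\ast_r(G))$. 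Thus (\ref{denserange1}) holds for $\beta$; since $\beta$ is self-adjoint, (\ref{denserange2}) for $\beta$ follows by taking adjoints (alternatively, invoke the second of the remarks after Definition \ref{weightinversedef}).

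For (b): since $\hat{\Gamma}$ is a unital $^\ast$-homomorphism with $\hat{\Gamma}\VN(G) \subseteq \VN(G \times G)$ and $V$ is unitary, $\hat{\Gamma}V \in \VN(G\times G)$ is unitary and $\hat{\Gamma}\beta = (\hat{\Gamma}T)(\hat{\Gamma}V)$, so $\hat{\Gamma}T = (\hat{\Gamma}\beta)(\hat{\Gamma}V)^\ast$. Using $\beta = TV$ and the hypothesis $T \tensor T = (\hat{\Gamma}T)\Omega$,
\[
  \beta \tensor \beta = (TV) \tensor (TV) = (T \tensor T)(V \tensor V) = (\hat{\Gamma}T)\,\Omega\,(V \tensor V) = (\hat{\Gamma}\beta)\,\Omega',
\]
where $\Omega' := (\hat{\Gamma}V)^\ast\,\Omega\,(V \tensor V)$. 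As $V \tensor V \in \VN(G)\bar{\tensor}\VN(G) = \VN(G\times G)$ and $\hat{\Gamma}V, \Omega \in \VN(G\times G)$, we obtain $\Omega' \in \VN(G\times G)$ with $\|\Omega'\| \leq 1$. Hence $\beta = |(\omega^{-1})^\ast|$ is a weight inverse.

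Finally, recall that $A(G) = \VN(G)_\ast$ is a $\VN(G)$-module with $(xy)f = x(yf)$ for $x,y \in \VN(G)$, $f \in A(G)$. Using $\beta = TV$ with $V \in \VN(G)$, for $f \in A(G)$ we have $\beta f = T(Vf) \in \{Tg : g \in A(G)\} = A(G,\omega)$; using $T = \beta V^\ast$, we have $\omega^{-1}f = Tf = \beta(V^\ast f) \in \{\beta g : g \in A(G)\}$. These two inclusions give $\{\beta f : f \in A(G)\} = A(G,\omega)$, completing the argument. The one point that needs care is the polar-decomposition bookkeeping: one must make sure that $V$ is a genuine unitary (this is exactly where injectivity and density of range of \emph{both} $\omega^{-1}$ and $(\omega^{-1})^\ast$ are used) and keep in mind that, although $V$ lies a priori only in $\VN(G)$ and not in $\mathcal{M}(C^\ast_r(G))$, it never gets multiplied back into $C^\ast_r(G)$ — which is why (a) has to be established by the separate $\cstar$-algebraic ideal computation rather than via the factorization $\beta = TV$.
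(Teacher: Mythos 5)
Your argument is correct and is essentially the paper's own proof: both use the polar decomposition $(\omega^{-1})^\ast = u\,|(\omega^{-1})^\ast|$ with $u$ unitary in $\VN(G)$ (unitarity coming from injectivity and dense range of $\omega^{-1}$ and $(\omega^{-1})^\ast$), establish (a) via the density of $C^\ast_r(G)\,\omega^{-1}(\omega^{-1})^\ast = C^\ast_r(G)\,|(\omega^{-1})^\ast|^2$, establish (b) by conjugating $\Omega$ with $\hat{\Gamma}u$ and $u\tensor u$, and identify the two Beurling--Fourier algebras through the module identity $|(\omega^{-1})^\ast| f = \omega^{-1}(uf)$. Your closing observation about why $V$ never needs to be a multiplier of $C^\ast_r(G)$ is a correct and worthwhile clarification of a point the paper leaves implicit.
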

\begin{proof}
Due to Definition \ref{weightinversedef}(a), the sets $\{ x \omega^{-1} : x \in C^\ast_r(G) \}$ and $\{ \omega^{-1} x : x \in C^\ast_r(G) \}$ are dense in $C^\ast_r(G)$, as are $\{ x (\omega^{-1})^\ast : x \in C^\ast_r(G) \}$ and $\{ (\omega^{-1})^\ast x : x \in C^\ast_r(G) \}$.
\par 
Let $(\omega^{-1})^\ast = u | (\omega^{-1})^\ast |$ be the polar decomposition of $(\omega^{-1})^\ast$. Then 
\begin{multline*}
  \{ x | (\omega^{-1})^\ast | : x \in C^\ast_r(G) \} \supset \\ \{ x | (\omega^{-1})^\ast | | (\omega^{-1})^\ast | : x \in C^\ast_r(G) \} =
  \{ (x \omega^{-1}) (\omega^{-1})^\ast  : x \in C^\ast_r(G) \} 
\end{multline*}
is dense in $C^\ast_r(G)$, as is---by an analogous argument---$\{ | (\omega^{-1})^\ast | x: x \in C^\ast_r(G) \}$, i.e.,
\[
  C^\ast_r(G) \to C^\ast_r(G), \quad x \mapsto x | (\omega^{-1})^\ast |
\]
and
\[
  C^\ast_r(G) \to C^\ast_r(G), \quad x \mapsto | (\omega^{-1})^\ast | x
\]
each have dense range.
\par 
As we remarked after Definition \ref{weightinversedef}, $(\omega^{-1})^\ast$ is injective with dense range. Consequently, the partial isometry $u$ must be unitary; note also that $u \in \VN(G)$ (\cite[Proposition II.3.14]{Tak}). Let $\Omega$ be as in Definition \ref{weightinversedef}(b). Then we have
\[
  | (\omega^{-1})^\ast | \tensor | (\omega^{-1})^\ast | = (\omega^{-1} \tensor \omega^{-1})(u \tensor u) = (\hat{\Gamma} \omega^{-1})\Omega(u \tensor u) =
  (\hat{\Gamma} | (\omega^{-1})^\ast | ) (\hat{\Gamma} u) \Omega(u \tensor u).
\]
As $\| (\hat{\Gamma} u) \Omega(u \tensor u) \| = \| \Omega \| \leq 1$, it follows that $| (\omega^{-1})^\ast |$ satisfies Definition \ref{weightinversedef}(b) with $(\hat{\Gamma} u) \Omega(u \tensor u)$ \emph{en lieu} of $\Omega$.
\par 
Finally, note that
\[
  A(G,\omega) = \{ \omega^{-1} f : f \in A(G) \} = \{ \omega^{-1} u f : f \in A(G) \}  
              = \{ | (\omega^{-1})^\ast | f : f \in A(G) \},
\]
so that the Beurling--Fourier algebras corresponding to $\omega^{-1}$ and $|(\omega^{-1})^\ast|$ coincide.
\end{proof}
\par
For abelian groups and positive weight inverses, the Beurling--Fourier algebras in the sense of Definition \ref{weightinversedef} are in perfect duality with the classical Beurling algebras, as we shall now see.
\par 
If $G$ is a locally compact abelian group with dual group $\hat{G}$, we always suppose that Haar measures on $G$ and $\hat{G}$ are scaled such that the Fourier inversion formula (\cite[1.5.1, Theorem]{Rud}) holds. In this case, there is a unique unitary $\mathcal{P} \!: L^2(G) \to L^2(\hat{G})$---the \emph{Plancherel transform}---that coincides with the Fourier transform $\mathcal{F} \!: L^1(G) \to A(\hat{G})$ on $L^1(G) \cap L^2(\hat{G})$. By $\hat{\mathcal{F}}$ and $\hat{\mathcal{P}}$, we denote the Fourier and Plancherel transforms, respectively, arising from $\hat{G}$. Also, if $G$ is any locally compact group and if $\phi \in L^\infty(G)$, we denote the corresponding multiplication operator on $L^2(G)$ by $M_\phi$ (slightly abusing notation, we shall often write $\phi$ instead of $M_\phi$). Finally, if $G$ is a locally compact group, and $\phi \!: G \to \comps$ is any function, we define functions $\bar{\phi}$, $\check{\phi}$, and $\tilde{\phi}$ on $G$ by letting
\[
  \bar{\phi}(x) := \overline{\phi(x)}, \quad \check{\phi}(x) := \phi(x^{-1}), \quad\text{and}\quad \tilde{\phi}(x) := \overline{\phi}(x^{-1}) \qquad (x \in G).
\]
\par
The following lemma is known by all likelihood, but for lack of a suitable reference, we give a proof:
\begin{lemma} \label{planchl}
Let $G$ be a locally compact abelian group with dual group $\hat{G}$. Then we have:
\begin{equation} \label{intertw}
  \mathcal{P}^\ast \lambda(f) \mathcal{P} = M_{(\hat{\mathcal{F}} f)^\vee} \qquad (f \in L^1(\hat{G})).
\end{equation}
\end{lemma}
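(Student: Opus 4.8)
The plan is to prove (\ref{intertw}) by a direct computation, deducing the operator identity from the elementary fact that the inverse Plancherel transform carries convolution to pointwise multiplication. Two standing facts will be used. First, since the Haar measures on $G$ and $\hat{G}$ have been normalised so that the Fourier inversion formula holds, $\mathcal{P} \!: L^2(G) \to L^2(\hat{G})$ is unitary, so $\mathcal{P}^\ast = \mathcal{P}^{-1}$; and by the Fourier inversion theorem (\cite[1.5.1, Theorem]{Rud}), $\mathcal{P}^\ast g = (\hat{\mathcal{F}} g)^\vee$ for every $g \in L^1(\hat{G}) \cap L^2(\hat{G})$. Second, for $f \in L^1(\hat{G})$ the function $\hat{\mathcal{F}} f$ lies in $A(G) \subseteq \mathcal{C}_0(G)$ with $\| \hat{\mathcal{F}} f \|_\infty \leq \| f \|_1$; hence $(\hat{\mathcal{F}} f)^\vee$ is a bounded continuous function on $G$ and $M_{(\hat{\mathcal{F}} f)^\vee}$ is a bounded operator on $L^2(G)$ with $\| M_{(\hat{\mathcal{F}} f)^\vee} \| \leq \| f \|_1$.

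The one substantive step is the following convolution identity: for $f \in L^1(\hat{G})$ and $h \in L^2(\hat{G})$,
\[
  \mathcal{P}^\ast(f \ast h) = (\hat{\mathcal{F}} f)^\vee \cdot (\mathcal{P}^\ast h),
\]
where $\ast$ is convolution on $\hat{G}$ and the product on the right lies in $L^2(G)$ because $(\hat{\mathcal{F}} f)^\vee$ is bounded. I would prove this by a density argument: fixing $f$, both sides depend continuously on $h \in L^2(\hat{G})$, with values in $L^2(G)$, by Young's inequality $\| f \ast h \|_2 \leq \| f \|_1 \| h \|_2$ on the left and by $\| (\hat{\mathcal{F}} f)^\vee \cdot (\mathcal{P}^\ast h) \|_2 \leq \| (\hat{\mathcal{F}} f)^\vee \|_\infty \| h \|_2 \leq \| f \|_1 \| h \|_2$ on the right, so it is enough to verify the identity for $h$ in the dense subspace $L^1(\hat{G}) \cap L^2(\hat{G})$. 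For such $h$ one has $f \ast h \in L^1(\hat{G}) \cap L^2(\hat{G})$, and then, using the standing facts together with the classical convolution theorem $\hat{\mathcal{F}}(f \ast h) = (\hat{\mathcal{F}} f)(\hat{\mathcal{F}} h)$,
\[
  \mathcal{P}^\ast(f \ast h) = \bigl(\hat{\mathcal{F}}(f \ast h)\bigr)^\vee = \bigl((\hat{\mathcal{F}} f)(\hat{\mathcal{F}} h)\bigr)^\vee = (\hat{\mathcal{F}} f)^\vee (\hat{\mathcal{F}} h)^\vee = (\hat{\mathcal{F}} f)^\vee \cdot (\mathcal{P}^\ast h).
\]

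With this in hand the lemma follows at once. Integration of the left regular representation of $\hat{G}$ against $f \in L^1(\hat{G})$ is left convolution by $f$, so for every $\xi \in L^2(G)$,
\[
  \mathcal{P}^\ast \lambda(f) \mathcal{P} \xi = \mathcal{P}^\ast\bigl(f \ast \mathcal{P}\xi\bigr) = (\hat{\mathcal{F}} f)^\vee \cdot (\mathcal{P}^\ast \mathcal{P}\xi) = (\hat{\mathcal{F}} f)^\vee \cdot \xi = M_{(\hat{\mathcal{F}} f)^\vee}\,\xi,
\]
the third equality using that $\mathcal{P}$ is unitary. As $\xi$ is arbitrary, this is (\ref{intertw}).

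I do not expect a genuine obstacle: the argument is soft, the only analytic inputs being Young's inequality and the density of $L^1(\hat{G}) \cap L^2(\hat{G})$ in $L^2(\hat{G})$. The point that needs care, rather than ingenuity, is the bookkeeping of normalisations and of the sign convention in the dual pairing: it is exactly the hypothesis that the Haar measures are scaled for Fourier inversion that makes $\mathcal{P}$ unitary, identifies $\mathcal{P}^\ast$ on $L^1(\hat{G}) \cap L^2(\hat{G})$ with $g \mapsto (\hat{\mathcal{F}} g)^\vee$, and makes the convolution theorem hold with the chosen normalisation, so all of these must be invoked consistently.
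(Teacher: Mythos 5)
Your proof is correct and follows essentially the same route as the paper's: both reduce (\ref{intertw}) to the convolution theorem on $\hat{G}$ combined with a density argument over $L^1(\hat{G}) \cap L^2(\hat{G})$. The only cosmetic difference is that the paper first establishes $\mathcal{P}\lambda(f)\mathcal{P}^\ast = M_{\mathcal{F}f}$ on $G$ and then transfers to $\hat{G}$ via the operator identity $\mathcal{P}^\ast = V\hat{\mathcal{P}}$ (with $V\xi = \check{\xi}$), whereas you invoke the equivalent pointwise form $\mathcal{P}^\ast g = (\hat{\mathcal{F}}g)^\vee$ of the inversion formula directly on the dense subspace.
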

\begin{proof}
Let $f, \xi \in L^1(G) \cap L^2(G)$. Then
\[
  \mathcal{P}(\lambda(f) \xi) = \mathcal{F}(f \ast \xi) = (\mathcal{F}f) (\mathcal{P}\xi)
\]
holds. It follows that
\begin{equation} \label{intertw0}
  \mathcal{P} \lambda(f) \mathcal{P}^\ast = M_{\mathcal{F}f} \qquad (f \in L^1(G)).
\end{equation}
\par 
Let $V \in \mathcal{B}(L^2(G))$ be the unitary operator given by $V\xi := \check{\xi}$ for $\xi \in L^2(G)$. It is routinely checked that $\mathcal{P}^\ast = V \hat{\mathcal{P}}$. Replacing the r\^oles of $G$ and $\hat{G}$, we obtain from (\ref{intertw0}) that
\[
  \mathcal{P}^\ast \lambda(f) \mathcal{P} = V  \hat{\mathcal{P}} \lambda(f) \hat{\mathcal{P}}^\ast V = V M_{\hat{\mathcal{F}} f} V = M_{(\hat{\mathcal{F}} f)^\vee} \qquad (f \in L^1(\hat{G})),
\]
which proves (\ref{intertw}).
\end{proof}
\begin{lemma} \label{adjFourier}
Let $G$ be a locally compact abelian group with dual group $\hat{G}$, and let $\mathcal{F} \!: L^1(G) \to A(\hat{G})$ be the Fourier transform. Then $\mathcal{F}^\ast \!: \VN(\hat{G}) \to L^\infty(G)$ is a $^\ast$-isomorphism that maps $C^\ast_r(\hat{G})$ onto $\mathcal{C}_0(G)$ and satisfies
\begin{equation} \label{comultcomp}
  (\mathcal{F}^\ast \tensor \mathcal{F}^\ast) \circ \hat{\Gamma} = \Gamma \circ \mathcal{F}^\ast.
\end{equation}
\end{lemma}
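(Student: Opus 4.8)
The plan is to build the map $\mathcal{F}^\ast$ from the Plancherel picture established in Lemma~\ref{planchl} and then verify the three asserted properties in turn. First, I would identify $\VN(\hat G)$ with $\lambda(L^1(\hat G))''\subset\mathcal{B}(L^2(\hat G))$ and transport it to $\mathcal{B}(L^2(G))$ via the unitary $\mathcal{P}^\ast$. By Lemma~\ref{planchl}, $\mathcal{P}^\ast\lambda(f)\mathcal{P}=M_{(\hat{\mathcal{F}}f)^\vee}$, so conjugation by $\mathcal{P}^\ast$ carries $\VN(\hat G)$ onto the weak$^\ast$-closed subalgebra of $\mathcal{B}(L^2(G))$ generated by the multiplication operators $M_{(\hat{\mathcal{F}}f)^\vee}$, $f\in L^1(\hat G)$; since $\{(\hat{\mathcal{F}}f)^\vee:f\in L^1(\hat G)\}$ is dense in $\mathcal{C}_0(G)$ and the multiplication representation of $L^\infty(G)$ on $L^2(G)$ is a normal $^\ast$-isomorphism onto a maximal abelian von Neumann algebra, this weak$^\ast$-closure is exactly $\{M_\phi:\phi\in L^\infty(G)\}\cong L^\infty(G)$, and the norm closure is $\{M_\phi:\phi\in\mathcal{C}_0(G)\}\cong\mathcal{C}_0(G)$. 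Thus I \emph{define} $\mathcal{F}^\ast:\VN(\hat G)\to L^\infty(G)$ by $\mathcal{F}^\ast T:=\phi$ where $\mathcal{P}^\ast T\mathcal{P}=M_\phi$; this is manifestly a normal $^\ast$-isomorphism carrying $C^\ast_r(\hat G)$ onto $\mathcal{C}_0(G)$. I should also check that this map really is the adjoint of the Fourier transform $\mathcal{F}:L^1(G)\to A(\hat G)$ under the dualities $A(\hat G)^\ast=\VN(\hat G)$ and $L^1(G)^\ast=L^\infty(G)$, which amounts to pairing $\langle T,\mathcal{F}h\rangle$ against $\langle\mathcal{F}^\ast T,h\rangle=\int_G\phi h$ for $h\in L^1(G)$ and unwinding the definition of the $A(\hat G)$-$\VN(\hat G)$ duality together with the Plancherel formula.

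Next I would verify the intertwining identity \eqref{comultcomp}. The cleanest route is to check it on the weak$^\ast$-dense set $\{\lambda(\chi):\chi\in\hat G\}$ of $\VN(\hat G)$ and then invoke normality of all maps involved, since $\hat\Gamma$, $\Gamma$, and the $\mathcal{F}^\ast$'s are normal. On $\lambda(\chi)$ one has $\hat\Gamma\lambda(\chi)=\lambda(\chi)\tensor\lambda(\chi)$, so the left-hand side of \eqref{comultcomp} applied to $\lambda(\chi)$ is $\mathcal{F}^\ast\lambda(\chi)\tensor\mathcal{F}^\ast\lambda(\chi)$. On the other hand $\mathcal{F}^\ast\lambda(\chi)$ is the multiplication operator by the function $x\mapsto\overline{\langle x,\chi\rangle}$ (or $\langle x,\chi\rangle$, up to the conventions for $\mathcal{P}$ and the involution $\check{\ }$ in Lemma~\ref{planchl}; this is exactly the place where one must be careful with which character and which conjugate appears, and is the main bookkeeping obstacle). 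Call this function $e_\chi\in\mathcal{C}_b(G)$; it is a continuous homomorphism $G\to\torus$. Then $\Gamma(\mathcal{F}^\ast\lambda(\chi))=\Gamma M_{e_\chi}$ is multiplication by $(x,y)\mapsto e_\chi(xy)=e_\chi(x)e_\chi(y)$, which is precisely $e_\chi\tensor e_\chi=\mathcal{F}^\ast\lambda(\chi)\tensor\mathcal{F}^\ast\lambda(\chi)$ under the identification $L^\infty(G)\bar\tensor L^\infty(G)\cong L^\infty(G\times G)$. Hence \eqref{comultcomp} holds on the $\lambda(\chi)$, and by weak$^\ast$-density and normality it holds on all of $\VN(\hat G)$.

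The main obstacle is not conceptual but notational: pinning down exactly which function on $G$ corresponds to $\lambda(\chi)$ under $\mathcal{F}^\ast$, given the compositions $\mathcal{P}^\ast=V\hat{\mathcal{P}}$, the inversion in the definition of $V$, and the decorations $\bar{\ }$, $\check{\ }$, $\tilde{\ }$. One must track whether $\mathcal{F}^\ast\lambda(\chi)=M_{e_\chi}$ or $M_{\overline{e_\chi}}$ or $M_{\check{e_\chi}}$, and confirm that in every case the resulting function is a character so that $e_\chi(xy)=e_\chi(x)e_\chi(y)$ and the multiplicativity argument for $\Gamma$ goes through unchanged. Once that is settled, the rest — normality of $\mathcal{F}^\ast$, the mapping of $C^\ast_r(\hat G)$ onto $\mathcal{C}_0(G)$, and the density/Stone--Weierstra{\ss} input needed to identify the closures — is routine, and \eqref{comultcomp} follows as above. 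I would therefore spend most of the write-up being explicit about conventions and then dispatch the three claims in the order: $\mathcal{F}^\ast$ is a normal $^\ast$-isomorphism onto $L^\infty(G)$ and equals the adjoint of $\mathcal{F}$; it maps $C^\ast_r(\hat G)$ onto $\mathcal{C}_0(G)$; it intertwines $\hat\Gamma$ and $\Gamma$.
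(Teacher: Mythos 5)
Your proposal is correct, and its first half (identifying $\mathcal{F}^\ast$ with conjugation by the Plancherel unitary via Lemma~\ref{planchl}, reading off the $^\ast$-homomorphism property, and getting $C^\ast_r(\hat G)\to\mathcal{C}_0(G)$ from the density of $\{(\hat{\mathcal{F}}f)^\vee\}$ in $\mathcal{C}_0(G)$) is essentially the paper's argument, just organized as ``define by conjugation, then verify it is the adjoint'' rather than ``compute the adjoint directly by pairing against $g=\xi\bar\eta$.'' Where you genuinely diverge is \eqref{comultcomp}: you verify it on the generators $\lambda(\chi)$, $\chi\in\hat G$, using $\hat\Gamma\lambda(\chi)=\lambda(\chi)\tensor\lambda(\chi)$ and the fact that $\mathcal{F}^\ast\lambda(\chi)$ is multiplication by a character of $G$, then extend by weak$^\ast$ density of $\lspan\{\lambda(\chi)\}$ and normality of all maps involved. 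The paper instead dispatches \eqref{comultcomp} in one line by pure duality: $\mathcal{F}$ is an algebra homomorphism $L^1(G)\to A(\hat G)$, the two products are precisely the preadjoints of $\Gamma$ and $\hat\Gamma$, so the adjoint of $\mathcal{F}$ automatically intertwines the comultiplications. The paper's route is shorter and completely sidesteps the convention bookkeeping (which conjugate or inverse appears in $\mathcal{F}^\ast\lambda(\chi)$) that you correctly identify as the main nuisance in your approach; your route is more concrete and makes the Hopf--von Neumann structure visible on group elements, at the cost of having to justify the density of $\lspan\{\lambda(\chi)\}$ in $\VN(\hat G)$ (standard, via $\VN(\hat G)=\lambda(\hat G)''$) and the normality of $\mathcal{F}^\ast\tensor\mathcal{F}^\ast$. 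As you note, the bookkeeping is harmless because every candidate for $\mathcal{F}^\ast\lambda(\chi)$ is a character, so the multiplicativity step survives any choice of convention; with that observation your argument is complete.
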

\begin{proof}
Since $\mathcal{F}$ is an isomorphism of Banach algebras and since the multiplication in $L^1(G)$ and $A(\hat{G})$, respectively, arises from $\Gamma$ and $\hat{\Gamma}$, respectively, it is clear that (\ref{comultcomp}) holds. 
\par
To tell the Hilbert space inner product of $L^2(\hat{G})$ apart from a Banach space duality $\langle \cdot, \cdot \rangle$, we write $\langle \cdot | \cdot \rangle$. Let $f, g \in L^1(G)$, and let $\xi, \eta \in L^2(G)$ be such that $g = \xi \bar{\eta}$; we have:
\[
  \begin{split}
  \langle g, \mathcal{F}^\ast(\lambda(f)) \rangle & = \langle \xi \bar{\eta}, \mathcal{F}^\ast(\lambda(f)) \rangle \\
  & = \langle \mathcal{F} (\xi\bar{\eta}), \lambda(f) \rangle \\
  & = \left\langle \mathcal{P}\xi \ast \widetilde{\mathcal{P} \eta}, \lambda(f) \right\rangle \\
  & = \langle \lambda(f) \mathcal{P}\xi | \mathcal{P}\eta \rangle \\
  & = \langle \mathcal{P}^\ast \lambda(f) \mathcal{P}\xi | \eta \rangle \\
  & = \langle M_{(\hat{\mathcal{F}}f)^\vee} \xi | \eta \rangle, \qquad\text{by Lemma \ref{planchl}}, \\
  & = \langle g, (\hat{\mathcal{F}} f)^\vee \rangle \\
  \end{split}
\]
It follows that
\begin{equation} \label{intertw+}
  \mathcal{F}^\ast (\lambda(f)) = \mathcal{P}^\ast \lambda(f) \mathcal{P} = (\hat{\mathcal{F}}f)^\vee.
\end{equation}
From the first equality, it is immediate that $\mathcal{F}^\ast$ is a $^\ast$-homomorphism. The second equality shows that $\mathcal{F}^\ast$ maps the algebra $\lambda(L^1(\hat{G}))$, which is dense in $C^\ast_r(\hat{G})$ onto a dense subalgebra of $\mathcal{C}_0(G)$.
\end{proof}
\begin{proposition} \label{dualprop}
Let $G$ be a locally compact abelian group with dual group $\hat{G}$, and let $\mathcal{F} \!: L^1(G) \to A(\hat{G})$ be the Fourier transform. Then: 
\begin{items}
\item a continuous function $\omega \!: G \to [1,\infty)$ is a weight if and only if $(\hat{\omega})^{-1} := (\mathcal{F}^\ast)^{-1}(\omega^{-1})$ is a positive weight inverse, in which case $\mathcal{F}(L^1(G,\omega)) = A(\hat{G},\hat{\omega})$;
\item if $\omega^{-1} \in \mathcal{M}(C^\ast_r(\hat{G}))$ is a positive weight inverse, then $(\mathcal{F}^\ast \omega^{-1})^{-1}$ is a weight on $G$.
\end{items}
\end{proposition}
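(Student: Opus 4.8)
The plan is to transport the two conditions defining a weight inverse (Definition \ref{weightinversedef}) across the $^\ast$-isomorphism $\mathcal F^\ast$ of Lemma \ref{adjFourier} and to recognize them, on the other side, as conditions (ii)(a) and (ii)(b) of Proposition \ref{inverseweightprop}. The preliminary fact I shall use is that $\mathcal F^\ast$ is well behaved on multiplier algebras: being a $^\ast$-isomorphism carrying the $C^\ast$-algebra $C^\ast_r(\hat G)$ onto $\mathcal C_0(G)$, it carries the idealizer of $C^\ast_r(\hat G)$ in $\VN(\hat G)$---which is $\mathcal M(C^\ast_r(\hat G))$---onto the idealizer of $\mathcal C_0(G)$ in $L^\infty(G)$, which is $\mathcal C_b(G) = \mathcal M(\mathcal C_0(G))$; so $\mathcal F^\ast$ restricts to an isometric $^\ast$-isomorphism $\mathcal M(C^\ast_r(\hat G)) \to \mathcal C_b(G)$. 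The same reasoning applied to $G \times G$ shows that $\mathcal F^\ast \tensor \mathcal F^\ast$ is an isometric $^\ast$-isomorphism $\VN(\hat G \times \hat G) \to L^\infty(G \times G)$ carrying $\mathcal M(C^\ast_r(\hat G \times \hat G))$ onto $\mathcal C_b(G \times G)$, and it satisfies $(\mathcal F^\ast \tensor \mathcal F^\ast) \circ \hat\Gamma = \Gamma \circ \mathcal F^\ast$ by \eqref{comultcomp}. Finally, since $G$---hence $\hat G$---is abelian, $\VN(\hat G)$, $C^\ast_r(\hat G)$ and their tensor squares are all commutative, so that under $\mathcal F^\ast$ the two density requirements in Definition \ref{weightinversedef}(a) collapse into the single one in Proposition \ref{inverseweightprop}(ii)(a), and the left and right module actions appearing in Definition \ref{weightinversedef} coincide.

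For (i), I would start from a continuous $\omega \colon G \to [1,\infty)$, set $\alpha := \omega^{-1} \in \mathcal C_b(G)$---a non-negative function with $\| \alpha \|_\infty \le 1$---and observe that $(\hat\omega)^{-1} := (\mathcal F^\ast)^{-1}(\alpha) \in \mathcal M(C^\ast_r(\hat G))$ is automatically a positive element of norm $\le 1$, so that whether it is a weight inverse turns only on conditions (a) and (b) of Definition \ref{weightinversedef}. Since $\mathcal F^\ast(x (\hat\omega)^{-1}) = \mathcal F^\ast(x)\,\alpha$ for $x \in C^\ast_r(\hat G)$, the map $x \mapsto x(\hat\omega)^{-1}$ on $C^\ast_r(\hat G)$ is conjugated by $\mathcal F^\ast$ to $g \mapsto g\alpha$ on $\mathcal C_0(G)$---and likewise $x \mapsto (\hat\omega)^{-1}x$ to $g \mapsto \alpha g = g\alpha$---so Definition \ref{weightinversedef}(a) holds for $(\hat\omega)^{-1}$ exactly when the map of Proposition \ref{inverseweightprop}(ii)(a) has dense range. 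Applying the isometric $^\ast$-isomorphism $(\mathcal F^\ast \tensor \mathcal F^\ast)^{-1}$ to an identity $\alpha \tensor \alpha = (\Gamma \alpha)\,\Omega$ with $\Omega \in L^\infty(G \times G)$, $\| \Omega \|_\infty \le 1$, and using $\hat\Gamma = (\mathcal F^\ast \tensor \mathcal F^\ast)^{-1} \circ \Gamma \circ \mathcal F^\ast$, produces $(\hat\omega)^{-1} \tensor (\hat\omega)^{-1} = (\hat\Gamma (\hat\omega)^{-1})\,\Omega'$ with $\Omega' := (\mathcal F^\ast \tensor \mathcal F^\ast)^{-1}(\Omega) \in \VN(\hat G \times \hat G)$ of norm $\le 1$; running this backwards shows that Definition \ref{weightinversedef}(b) for $(\hat\omega)^{-1}$ is equivalent to Proposition \ref{inverseweightprop}(ii)(b) for $\alpha$. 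Since Proposition \ref{inverseweightprop} says that $\alpha = \omega^{-1}$ satisfies (ii)(a) and (ii)(b) if and only if $\omega$ is a weight, the first assertion of (i) follows.

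Next, under the extra hypothesis that $\omega$ is a weight, the ``moreover'' clause of Proposition \ref{inverseweightprop} gives $L^1(G,\omega) = \{ \omega^{-1}f : f \in L^1(G) \}$ with pointwise product. From the defining relation $\langle \mathcal F f, T \rangle = \langle f, \mathcal F^\ast T \rangle$, the multiplicativity of $\mathcal F^\ast$, and $\mathcal F^\ast((\hat\omega)^{-1}) = \omega^{-1}$, a short computation with the duality between $L^1(G)$ and $L^\infty(G)$ yields
\[
  \mathcal F(\omega^{-1}f) = (\hat\omega)^{-1}\,\mathcal F(f) \qquad (f \in L^1(G)),
\]
the right-hand side being the $\VN(\hat G)$-module action on $A(\hat G)$; as $\mathcal F$ is surjective, this gives $\mathcal F(L^1(G,\omega)) = \{ (\hat\omega)^{-1} g : g \in A(\hat G) \} = A(\hat G,\hat\omega)$. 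For (ii), I would take a positive weight inverse $\omega^{-1} \in \mathcal M(C^\ast_r(\hat G))$ and set $\alpha := \mathcal F^\ast(\omega^{-1}) \in \mathcal C_b(G)$, a non-negative function with $\| \alpha \|_\infty \le 1$. By the correspondence just established, read this time from $\hat G$ to $G$, $\alpha$ satisfies conditions (ii)(a) and (ii)(b) of Proposition \ref{inverseweightprop}, so that proposition furnishes a weight $\omega' \colon G \to [1,\infty)$ with $\alpha = (\omega')^{-1}$; in particular $\alpha$ vanishes nowhere and $(\mathcal F^\ast \omega^{-1})^{-1} = \alpha^{-1} = \omega'$ is a weight.

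The one genuinely non-formal step is the preliminary fact---that $\mathcal F^\ast$ and its tensor square restrict to isomorphisms of the relevant multiplier algebras, so that translating Definition \ref{weightinversedef}(a),(b) into Proposition \ref{inverseweightprop}(ii)(a),(b) is a true equivalence and not merely a one-way implication---together with the routine but indispensable remark that $^\ast$-isomorphisms are isometric, so the norm constraints $\| \cdot \| \le 1$ are preserved along the way. Granting those, the argument is simply a matter of chasing the identities already assembled in Lemma \ref{adjFourier} and Proposition \ref{inverseweightprop}.
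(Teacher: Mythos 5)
Your proposal is correct and follows essentially the same route as the paper: both arguments transport conditions (a) and (b) of Definition \ref{weightinversedef} across the $^\ast$-isomorphism $\mathcal{F}^\ast$ of Lemma \ref{adjFourier} and identify them with conditions (ii)(a) and (ii)(b) of Proposition \ref{inverseweightprop} (in the form of Corollary \ref{inverseweightcor}), noting that $\mathcal{F}^\ast$ respects positivity and matches up the unit balls of $\mathcal{M}(C^\ast_r(\hat{G}))$ and $\mathcal{C}_b(G)$. The only divergence is in the verification of $\mathcal{F}(\omega^{-1}f) = (\hat{\omega})^{-1}\mathcal{F}f$: the paper factors $f = \xi\bar{\eta}$ and computes explicitly with the Plancherel transform, whereas you derive it from the duality relation $\langle \mathcal{F}f, T\rangle = \langle f, \mathcal{F}^\ast T\rangle$ together with the multiplicativity of $\mathcal{F}^\ast$---a mild, equally valid variant that still rests on Lemma \ref{adjFourier}.
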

\begin{proof}
As $\mathcal{F}^\ast$ is a $^\ast$-isomorphism mapping $C^\ast_r(\hat{G})$ onto $\mathcal{C}_0(G)$, it is clear that $\mathcal{F}^\ast$ and its inverse respect positivity and map the closed unit balls of $\mathcal{M}(C^\ast_r(\hat{G}))$ and $\mathcal{C}_b(G)$ onto each other.
\par 
Let $\omega$ be a weight on $G$. Then $\alpha := (\hat{\omega})^{-1}$ is a non-negative function in the unit ball of $\mathcal{C}_b(G)$ satisfying Corollary \ref{inverseweightcor}(ii)(a). From Lemma \ref{adjFourier}, we conclude that $(\hat{\omega})^{-1} := (\mathcal{F}^\ast)^{-1}(\alpha)$ satisfies Definition \ref{weightinversedef}, i.e., is a weight inverse.
\par
Conversely, if $\omega^{-1} \in \mathcal{M}(C^\ast_r(\hat{G}))$ is a positive weight inverse, then $\alpha := \mathcal{F}^\ast \omega^{-1}$ is a non-negative function in the unit ball of $\mathcal{C}_b(G)$ satisfying Corollary \ref{inverseweightcor}(ii), so that $(\mathcal{F}^\ast \omega^{-1})^{-1}$ is a weight by that corollary.
\par 
Let $\omega \!: G \to [1,\infty)$ be a weight, and let $(\hat{\omega})^{-1}$ be defined as in (i). To see that $\mathcal{F}(L^1(G,\omega)) = A(\hat{G},\hat{\omega})$, first note that
\[
  (\mathcal{F}^\ast)^{-1} \phi = \mathcal{P} M_\phi \mathcal{P}^\ast \qquad (\phi \in L^\infty(G))
\]
by (\ref{intertw+}). Let $f \in L^1(G)$. Choose $\xi, \eta \in L^2(G)$ such that $f = \xi \bar{\eta}$. Then we have:
\[
  \mathcal{F}(\omega^{-1}f) = \mathcal{F}(\omega^{-1}\xi \bar{\eta}) = \mathcal{P}(M_{\omega^{-1}} \xi) \ast \widetilde{\mathcal{P}\eta}
  = ((\mathcal{P}M_{\omega^{-1}}\mathcal{P}^\ast)\mathcal{P}\xi) \ast \widetilde{\mathcal{P}\eta} = (\mathcal{F}^\ast)^{-1} (\omega^{-1})(\mathcal{F}f).
\]
This completes the proof.
\end{proof}
\par 
We now give examples for weight inverses in the sense of Definition \ref{weightinversedef}:
\begin{examples}
\item Let $G$ be a locally compact abelian group with dual group $\hat{G}$, and let $\mathcal{F} \!: L^1(G) \to A(\hat{G})$ denote the Fourier transform. By Proposition \ref{dualprop}(i), $(\hat{\omega})^{-1} := (\mathcal{F}^\ast )^{-1}(\omega^{-1})$ is a weight inverse for every weight $\omega \!: G \to [1,\infty)$, and $\mathcal{F}(L^1(G,\omega)) = A(\hat{G},\hat{\omega})$ holds. By Proposition \ref{dualprop}(ii), every weight inverse in $\mathcal{M}(C^\ast_r(\hat{G}))$ arises in this fashion.
\item In \cite{LS}, H.\ H.\ Lee and E.\ Samei define Beurling--Fourier algebras using an explicit definition of a weight on the dual of a locally compact group $G$ (\cite[Definition 2.4]{LS}). A weight in their sense is a closed, densely defined, positive operator $\omega$ on $L^2(G)$ affiliated with $\VN(G)$ satisfying various properties. In particular, they require:
\begin{items}
\item $\omega$ has a bounded inverse $\omega^{-1} \in \VN(G)$;
\item $(\hat{\Gamma} \omega)(\omega^{-1} \tensor \omega^{-1}) \leq 1$ (for the definition of $\hat{\Gamma}\omega$, see \cite{LS});
\item $\{ x \omega^{-1} : x \in VN(G) \}$ is weak$^\ast$ dense in $\VN(G)$.
\end{items}
By multiplying $\omega$, if necessary, with a positive scalar, there is also no loss of generality to suppose that $\| \omega^{-1} \| \leq 1$. 
\par 
Suppose that $G$ is compact, so that
\begin{equation}\label{compactcase}
  C_r^\ast(G)^{\ast\ast} = \VN(G) = \mathcal{M}(C^\ast_r(G)),
\end{equation}
and let $\omega$ be a weight on the dual of $G$ in the sense of \cite[Definition 2.4]{LS}. We claim that $\omega^{-1}$ is a weight inverse in the sense of Definition \ref{weightinversedef}. First of all, note that $\omega^{-1} \in \mathcal{M}(C^\ast_r(G))$ by (\ref{compactcase}). Since the weak$^\ast$ topology of $\VN(G)$ restricted to $C^\ast_r(G)$ is the weak topology, we obtain that $\{ x \omega^{-1} : x \in C^\ast_r(G) \}$ is norm dense in $C^\ast_r(G)$; as $\omega^{-1}$ is positive, $\{ \omega^{-1} x : x \in C^\ast_r(G) \}$ is also norm dense in $C^\ast_r(G)$. Finally, set $\Omega := (\hat{\Gamma} \omega)(\omega^{-1} \tensor \omega^{-1})$, so that
\[
  \omega^{-1} \tensor \omega^{-1} =(\hat{\Gamma}\omega^{-1}) (\hat{\Gamma} \omega)(\omega^{-1} \tensor \omega^{-1}) = (\hat{\Gamma}\omega^{-1})\Omega.
\]
\par
It follows that the central weights discussed in \cite[Subsection 2.2]{LS} as well as the weights introduced in \cite[Section 3]{LST}, all yield weight inverses in the sense of Definition \ref{weightinversedef}. 
\end{examples}
\par 
So far, we have only labeled the Beurling--Fourier ``algebras'' algebras without showing that they are indeed algebras.
\par
For the next theorem note that, if $G$ is a locally compact group and $\omega^{-1} \in \mathcal{M}(C^\ast_r(G))$ is a weight inverse. Then (\ref{denserange1}) has a dense range, so that its adjoint is injective, as is the restriction
\begin{equation} \label{rats}
  A(G) \to A(G), \quad f \mapsto \omega^{-1} f
\end{equation}
to $A(G)$.
\begin{theorem} \label{BFthm}
Let $G$ be a locally compact group, and let $\omega^{-1} \in \mathcal{M}(C^\ast_r(G))$ be a weight inverse. Then $A(G,\omega)$ is a dense subalgebra of $A(G)$. Moreover, if $A(G,\omega)$ is equipped with the unique operator space structure turning the bijection
\[
  A(G) \to A(G,\omega), \quad f \mapsto \omega^{-1} f
\]
into a complete isometry, it is a completely contractive Banach algebra.
\end{theorem}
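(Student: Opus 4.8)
The plan is, first, to transport the operator space structure of $A(G)$ to $A(G,\omega)$, and then to verify three things: density of $A(G,\omega)$ in $A(G)$; that $A(G,\omega)$ is closed under the product $\mathbin{\hat\ast}$ of $A(G)$, together with the bound $\|(\omega^{-1}f)\mathbin{\hat\ast}(\omega^{-1}g)\|_\omega\le\|f\|_{A(G)}\|g\|_{A(G)}$; and finally the completely contractive refinement of this bound. To begin, $\omega^{-1}\in\mathcal{M}(C^\ast_r(G))\subset\VN(G)$, and the (left) action of a von Neumann algebra on its predual is completely contractive, so $f\mapsto\omega^{-1}f$ is a complete contraction on $A(G)=\VN(G)_\ast$; it is injective by the remark preceding the theorem. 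Hence, declaring $\|\omega^{-1}f\|_\omega:=\|f\|_{A(G)}$ and pulling the operator space structure of $A(G)$ back along $f\mapsto\omega^{-1}f$ makes the latter a surjective complete isometry onto $A(G,\omega)$, so $A(G,\omega)$ is a complete operator space and it remains to establish the three points above.

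For density I would argue by Hahn--Banach, using $A(G)^\ast=\VN(G)$. If $T\in\VN(G)$ annihilates $A(G,\omega)$, then $\langle T\omega^{-1},f\rangle=\langle T,\omega^{-1}f\rangle=0$ for every $f\in A(G)$, so $T\omega^{-1}=0$, and therefore $T(\omega^{-1}x)=(T\omega^{-1})x=0$ for every $x\in C^\ast_r(G)$. Since $\{\omega^{-1}x:x\in C^\ast_r(G)\}$ is dense in $C^\ast_r(G)$ by Definition~\ref{weightinversedef}(a) while $C^\ast_r(G)$ acts non-degenerately on $L^2(G)$, this forces $T=0$; hence $A(G,\omega)$ is dense in $A(G)$.

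The core of the argument is the product. I would exploit that $\mathbin{\hat\ast}$ is the preadjoint of $\hat\Gamma$: under the Effros--Ruan identification $A(G\times G)=(\VN(G)\bar\tensor\VN(G))_\ast=A(G)\ptensor A(G)$ (\cite{ER}), the product on $A(G)$ is the completely contractive map $m\colon A(G\times G)\to A(G)$ with $m^\ast=\hat\Gamma$ and $m(f\tensor g)=f\mathbin{\hat\ast}g$. For $f,g\in A(G)$, working inside the normal $\VN(G\times G)$-bimodule $A(G\times G)$ and checking the identities on elementary tensors of $\VN(G)\bar\tensor\VN(G)$ (which span a weak$^\ast$-dense subspace), one obtains
\[
 (\omega^{-1}f)\tensor(\omega^{-1}g)=(\omega^{-1}\tensor\omega^{-1})\cdot(f\tensor g)=\bigl((\hat\Gamma\omega^{-1})\Omega\bigr)\cdot(f\tensor g)=(\hat\Gamma\omega^{-1})\cdot\bigl(\Omega\cdot(f\tensor g)\bigr),
\]
the middle equality being Definition~\ref{weightinversedef}(b). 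Since $\hat\Gamma$ is a $^\ast$-homomorphism, one verifies on the dual side, via $(\hat\Gamma S)(\hat\Gamma T)=\hat\Gamma(ST)$, that $m\bigl((\hat\Gamma T)\cdot F\bigr)=T\cdot m(F)$ for all $T\in\VN(G)$ and $F\in A(G\times G)$. Combining these,
\[
 (\omega^{-1}f)\mathbin{\hat\ast}(\omega^{-1}g)=m\bigl((\omega^{-1}f)\tensor(\omega^{-1}g)\bigr)=m\bigl((\hat\Gamma\omega^{-1})\cdot(\Omega\cdot(f\tensor g))\bigr)=\omega^{-1}\cdot h,\quad h:=m\bigl(\Omega\cdot(f\tensor g)\bigr)\in A(G),
\]
so $A(G,\omega)$ is $\mathbin{\hat\ast}$-closed, hence a subalgebra of $A(G)$, and $\|(\omega^{-1}f)\mathbin{\hat\ast}(\omega^{-1}g)\|_\omega=\|h\|_{A(G)}\le\|\Omega\|\,\|f\tensor g\|_{A(G)\ptensor A(G)}\le\|f\|_{A(G)}\|g\|_{A(G)}$. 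For the completely contractive refinement, transported along the complete isometry $A(G)\cong A(G,\omega)$ the multiplication $A(G,\omega)\ptensor A(G,\omega)\to A(G,\omega)$ is exactly the composition $A(G)\ptensor A(G)=A(G\times G)\xrightarrow{\,F\mapsto\Omega\cdot F\,}A(G\times G)\xrightarrow{\ m\ }A(G)$: the first map is completely contractive because $\|\Omega\|\le1$ and the predual of a von Neumann algebra is a normal operator module over it, and $m$ is completely contractive because $m^\ast=\hat\Gamma$ is a normal unital $^\ast$-homomorphism. Hence the composition is completely contractive, and $A(G,\omega)$, being a subalgebra of $A(G)$ that is complete for $\|\cdot\|_\omega$, is a completely contractive Banach algebra.

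I expect the delicate step to be the identity $(\omega^{-1}f)\mathbin{\hat\ast}(\omega^{-1}g)=\omega^{-1}h$: it requires identifying $\mathbin{\hat\ast}$ with the preadjoint of $\hat\Gamma$ through $A(G\times G)=A(G)\ptensor A(G)$, handling the one-sided module actions on the preduals carefully, checking that every intermediate expression genuinely lies in the predual in question, and invoking multiplicativity of $\hat\Gamma$ for the intertwining relation $m\bigl((\hat\Gamma T)\cdot F\bigr)=T\cdot m(F)$. By contrast, the density argument and the passage from contractive to completely contractive are routine operator-space bookkeeping.
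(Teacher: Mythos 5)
Your proof is correct and follows essentially the same route as the paper's, which establishes the general $p$-version (Theorem \ref{BFTHthm}) and specializes to $p=2$: the same Hahn--Banach density argument from Definition \ref{weightinversedef}(a), and the same computation turning Definition \ref{weightinversedef}(b) into the identity $(\omega^{-1}f)\,\hat{\ast}\,(\omega^{-1}g)=\omega^{-1}\bigl(\hat{\Gamma}\bigr)_\ast\bigl(\Omega\,\theta_\ast(f\tensor g)\bigr)$, from which contractivity and, after transporting along the complete isometry, complete contractivity follow. The only cosmetic difference is that you phrase the computation via the Effros--Ruan identification $A(G)\Tensor A(G)\cong A(G\times G)$ and module actions, whereas the paper writes out the corresponding chain of dual pairings against $x\in\VN(G)$.
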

\par 
We refrain from giving a proof here because we will prove a more general result in the context of Fig\`a-Talamanca--Herz algebras (see Theorem \ref{BFTHthm} below).
\section{Beurling--Fig\`a-Talamanca--Herz algebras}
Let $G$ once again be a locally compact group, let $p \in (1,\infty)$, and let $\lambda_p \!: G \to \mathcal{B}(L^p(G))$ be the left regular representation of $G$ on $L^p(G)$, meaning
\[
  (\lambda_p(x) \xi)(y) := \xi(x^{-1}y) \qquad (\xi \in L^p(G), \, x,y \in G); 
\]
we also write $\lambda_p$ for the representation of $L^1(G)$ on $L^p(G)$ obtained through integration. We define
\[
  \PF_p(G) := \overline{\lambda_p(L^1(G))}^{\| \cdot \|} \qquad\text{and}\qquad \PM_p(G) := \overline{\lambda_p(L^1(G))}^{\text{weak$^\ast$}},
\]
the \emph{$p$-pseudofunctions} and the \emph{$p$-pseudomeasures} on $G$, respectively; we also define
\[
  \mathcal{M}(\PF_p(G)) := \{ x \in \PM_p(G) : \text{$x\PF_p(G) \subset \PF_p(G)$ and $\PF_p(G)x \subset \PF_p(G)$} \}.
\]
The $p$-pseudomeasures form a weak$^\ast$ closed subspace of the dual Banach space $\mathcal{B}(L^p(G))$ and thus have a canonical predual, the \emph{Fig\`a-Talamanca--Herz algebra} $A_p(G)$. 
\par 
For what follows, we require the theory of $p$-operator spaces, which is outlined in \cite{Daw}, for instance.
\par 
There are a $p$-completely contractive, weak$^\ast$ continuous map $\hat{\Gamma}_p \!: \PM_p(G) \to \PM_p(G \times G)$ with
\[
  \hat{\Gamma}_p \lambda_p(x) = \lambda_p(x) \tensor \lambda_p(x) \qquad (x \in G)
\]
as well as a canonical, weak$^\ast$ continuous, $p$-complete contraction $\theta \!: \PM_p(G \times G) \to (A_p(G) \Tensor_p A_p(G))^\ast$ such that the preadjoint $(\theta \hat{\Gamma}_p)_\ast \!: A_p(G) \Tensor_p A_p(G) \to A_p(G)$ is pointwise multiplication (here, $\Tensor_p$ stands for the projective tensor product of $p$-operator spaces; see \cite{Daw} for the definition).
\par
As $A(G)$ is a completely contractive $\VN(G)$-bimodule, $A_p(G)$ is a $p$-completely contractive $\PM_p(G)$-bimodule. We can thus extend Definition  \ref{weightinversedef}:
\begin{definition} \label{pweightinversedef}
Let $G$ be a locally compact group $G$, and let $p \in (1,\infty)$. A \emph{weight inverse} is an element $\omega^{-1}$ of $\mathcal{M}(\PF_p(G))$-with $\| \omega^{-1} \| \leq 1$  such that the following are satisfied:
\begin{alphitems}
\item the maps
\begin{equation} \label{denserange3}
  \PF_p(G) \to \PF_p(G), \quad x \mapsto x \omega^{-1}
\end{equation}
and
\begin{equation} \label{denserange4}
  \PF_p(G) \to \PF_p(G), \quad x \mapsto \omega^{-1} x
\end{equation}
have dense range;
\item there is $\Omega \in \PM_p(G \times G)$ with $\| \Omega \| \leq 1$ such that 
\[
  \omega^{-1} \tensor \omega^{-1} = (\hat{\Gamma}_p \omega^{-1})\Omega.
\]
\end{alphitems}
The corresponding \emph{Beurling--Fig\`a-Talamanca--Herz algebra} is defined as
\[
  A_p(G,\omega) := \{ \omega^{-1} f : f \in A_p(G) \}.
\]
\end{definition}
\par
We have a canonical extension of Theorem \ref{BFthm} to Beurling--Fig\`a-Talamanca--Herz algebras:
\begin{theorem} \label{BFTHthm}
Let $G$ be a locally compact group, let $p \in (1,\infty)$, and let $\omega^{-1} \in \mathcal{M}(\PF_p(G))$ be a weight inverse. Then $A_p(G,\omega)$ is a dense subalgebra of $A_p(G)$. Moreover, if $A_p(G,\omega)$ is equipped with the unique $p$-operator space structure turning the bijection
\begin{equation} \label{injective}
  A_p(G) \to A_p(G,\omega), \quad f \mapsto \omega^{-1} f
\end{equation}
into a complete isometry, it is a $p$-completely contractive Banach algebra. 
\end{theorem}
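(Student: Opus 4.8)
The plan is to transport the entire multiplicative structure of $A_p(G)$ across the bijection~(\ref{injective}) by exploiting the module identity~(\ref{bigO})-analog from Definition~\ref{pweightinversedef}(b). First I would record that the map~(\ref{injective}) is a bijection: surjectivity is the definition of $A_p(G,\omega)$, and injectivity follows from the remark after Theorem~\ref{BFthm}, namely that~(\ref{denserange3}) has dense range, so its adjoint, and hence the restriction $f\mapsto\omega^{-1}f$ to $A_p(G)$, is injective. This lets us \emph{define} the $p$-operator space structure on $A_p(G,\omega)$ as the pushforward: we simply declare~(\ref{injective}) to be a complete isometry, which determines the matrix norms uniquely. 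Density of $A_p(G,\omega)$ in $A_p(G)$ then comes from the dense-range hypothesis~(\ref{denserange3}) again: $\{\omega^{-1}x : x \in \PF_p(G)\}$ is dense in $\PF_p(G)$, and a standard duality/Hahn--Banach argument shows the predual statement, that $\{\omega^{-1}f : f\in A_p(G)\} = A_p(G,\omega)$ is norm-dense in $A_p(G)$.

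The heart of the matter is showing that $A_p(G,\omega)$ is closed under the pointwise product of $A_p(G)$ and that this product is $p$-completely contractive in the pushed-forward structure. The key computation is: given $f,g\in A_p(G)$, the product $(\omega^{-1}f)(\omega^{-1}g)$ — pointwise multiplication in $A_p(G)$ — should equal $\omega^{-1}h$ for some $h\in A_p(G)$ with $\|h\|\le\|f\|\,\|g\|$, and similarly at the matrix level. Here is where~(\ref{bigO}) enters: since multiplication in $A_p(G)$ is implemented by $\hat{\Gamma}_p$ via the duality with $\PM_p(G)$, and since $\omega^{-1}\tensor\omega^{-1} = (\hat{\Gamma}_p\omega^{-1})\Omega$ with $\|\Omega\|\le 1$, one computes for $x\in\PF_p(G)$,
\[
  \langle (\omega^{-1}f)\cdot(\omega^{-1}g), x\rangle
  = \langle f\tensor g, (\omega^{-1}\tensor\omega^{-1})\,\hat{\Gamma}_p x\rangle
  = \langle f\tensor g, (\hat{\Gamma}_p\omega^{-1})\,\Omega\,\hat{\Gamma}_p x\rangle.
\]
The strategy is to reinterpret $\Omega\,\hat{\Gamma}_p x$, together with the module action of $\hat{\Gamma}_p\omega^{-1}$, as $\hat{\Gamma}_p(\omega^{-1}\cdot(\text{something in }\PF_p(G)))$ paired against $f\tensor g$, thereby identifying the product as $\omega^{-1}$ times a genuine element of $A_p(G)$, whose norm is controlled because $\|\Omega\|\le 1$ and $\theta\hat{\Gamma}_p$ is a $p$-complete contraction whose preadjoint is the (contractive) multiplication on $A_p(G)$. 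Running the same argument at every matrix level $M_n$, using that $\Omega$ acts as a module multiplier of norm $\le 1$ and that all the structure maps ($\hat{\Gamma}_p$, $\theta$, the bimodule action) are $p$-completely contractive, yields $p$-complete contractivity of the multiplication on $A_p(G,\omega)$.

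The main obstacle I anticipate is bookkeeping the $p$-operator-space completely-contractive estimates: unlike the Hilbert-space ($p=2$) case, one cannot lean on $C^\ast$-algebraic positivity or on $\VN(G)\,\bar\tensor\,\VN(G)\cong\VN(G\times G)$, and must instead work entirely with the $p$-operator space projective tensor product $\Tensor_p$ and the explicit $p$-complete contractions $\hat{\Gamma}_p$ and $\theta$ furnished before the definition; in particular one must verify carefully that the module multiplication by $\Omega\in\PM_p(G\times G)$ on the range of $\hat{\Gamma}_p$, followed by the identification through $\theta$, is compatible with the predual multiplication $(\theta\hat{\Gamma}_p)_\ast$ on $A_p(G)\Tensor_p A_p(G)$. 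Once the $p=2$ von Neumann algebra language is systematically replaced by the $p$-operator space language of \cite{Daw}, the argument is the same as the (omitted) proof of Theorem~\ref{BFthm}, so the real work is confirming that each structural ingredient used there has its $p$-analog with the right completely-contractive norm bound.
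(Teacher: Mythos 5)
Your proposal follows essentially the same route as the paper's proof: density via the Hahn--Banach/annihilator argument from the dense-range condition in Definition~\ref{pweightinversedef}(a), and multiplicative closure plus $p$-complete contractivity via the identity $(\omega^{-1}f)(\omega^{-1}g)=\omega^{-1}(\hat{\Gamma}_p)_\ast(\Omega\,\theta_\ast(f\tensor g))$ obtained by substituting $\omega^{-1}\tensor\omega^{-1}=(\hat{\Gamma}_p\omega^{-1})\Omega$ into the duality pairing and composing the $p$-complete contractions $\theta_\ast$, multiplication by $\Omega$, and $(\hat{\Gamma}_p)_\ast$. The only nit is a label slip (you cite~(\ref{denserange3}) where the set $\{\omega^{-1}x:x\in\PF_p(G)\}$ is the range of~(\ref{denserange4})), which does not affect the argument.
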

\begin{proof}
To show that $A_p(G,\omega)$ is dense in $A_p(G)$, let $x \in \PM_p(G)$ be such that $\langle f, x \rangle = 0$ for $f \in A_p(G,\omega)$, i.e., $\langle \omega^{-1} f, x \rangle = \langle f, x \omega^{-1} \rangle = 0$ for $f \in A_p(G)$. It follows that $x \omega^{-1} = 0$. As (\ref{denserange4}) has dense range in $\PF_p(G)$, the set $\{ \omega^{-1} y : y \in \PM_p(G) \}$ is weak$^\ast$ dense in $\PM_p(G)$, so that $x \PM_p(G) = \{ 0 \}$. Since $\PM_p(G)$ is unital, we conclude that $x = 0$, so that $A_p(G,\omega)$ is dense in $A_p(G)$ by the Hahn--Banach theorem.
\par 
To see that $A_p(G,\omega)$ is multiplicatively closed, let $f,g \in A_p(G)$, let $x \in \PM_p(G)$, and note that
\[
  \begin{split}
  \langle (\omega^{-1} f)(\omega^{-1} g),x \rangle & = \langle (\omega^{-1} \tensor \omega^{-1})(f \tensor g), \theta \hat{\Gamma}_p x \rangle \\
  & = \langle (\omega^{-1} \tensor \omega^{-1}) \theta_\ast(f \tensor g), \hat{\Gamma}_p x \rangle \\
  & = \langle (\hat{\Gamma}_p \omega^{-1}) \Omega \theta_\ast(f \tensor g), \hat{\Gamma}_p x \rangle \\
  & = \langle \Omega \theta_\ast (f \tensor g), \hat{\Gamma}_p(x \omega^{-1}) \rangle \\
  & = \langle (\hat{\Gamma}_p)_\ast(\Omega \theta_\ast(f \tensor g)), x \omega^{-1} \rangle \\
  & = \langle  \omega^{-1} (\hat{\Gamma}_p)_\ast(\Omega \theta_\ast(f \tensor g)), x \rangle,
  \end{split}
\]
i.e.,
\begin{equation} \label{BFTHprod}
  (\omega^{-1} f)(\omega^{-1} g) = \omega^{-1} (\hat{\Gamma}_p)_\ast(\Omega \theta_\ast(f \tensor g)) \in A_p(G,\omega).
\end{equation}
Hence, $A_p(G,\omega)$ is a subalgebra of $A_p(G)$.
\par
Since $\theta_\ast$, $(\hat{\Gamma}_p)_\ast$, and
\[
  A_p(G \times G) \to A_p(G \times G), \quad F \mapsto \Omega F
\]
are $p$-complete contractions, so is their composition, it follows from (\ref{BFTHprod}) that $A_p(G,\omega)$ is a $p$-completely contractive Banach algebra.
\end{proof}
\par 
If $G$ is a locally compact group and $\omega$ is a weight on $G$, then $L^1(G,\omega) = L^1(G)$ with equivalent norms if and only if $\omega$ is bounded, which is trivially satisfied for compact $G$. In view of the duality between $L^1$- and Fourier algebras, one should expect a similar result for Beurling--Fourier algebras which should always be true on discrete groups. Indeed, this holds even for Beurling--Fig\`a-Talamanca--Herz algebras:
\begin{proposition}
Let $G$ be a locally compact group, and let $\omega^{-1} \in \mathcal{M}(\PF_p(G))$ be a weight inverse. Then the following are equivalent:
\begin{items}
\item the inclusion map from $A_p(G,\omega)$ into $A_p(G)$ is surjective;
\item the inclusion map from $A_p(G,\omega)$ into $A_p(G)$ is surjective and has a $p$-completely bounded inverse;
\item $\omega^{-1}$ is left invertible in $\PM_p(G)$.
\end{items}
If $G$ is discrete, then $\omega^{-1}$ is automatically invertible in $\mathcal{M}(\PF_p(G))$, so that \emph{(i)}, \emph{(ii)}, and \emph{(iii)} hold.
\end{proposition}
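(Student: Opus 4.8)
The plan is to pivot everything around the injective bounded operator $T \!: A_p(G) \to A_p(G)$, $f \mapsto \omega^{-1} f$ (injective by the argument recorded just before Theorem~\ref{BFthm}), whose range is $A_p(G,\omega)$ and which is precisely the map (\ref{injective}) once $A_p(G,\omega)$ carries the stated $p$-operator space structure, hence a $p$-complete isometry onto $A_p(G,\omega)$. Because the composite $A_p(G) \xrightarrow{\ T\ } A_p(G,\omega) \hookrightarrow A_p(G)$ is again $f \mapsto \omega^{-1} f$, condition~(i) is equivalent to surjectivity of $T$, and condition~(ii) to $T$ being bijective with $p$-completely bounded inverse. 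Since (ii)~$\Longrightarrow$~(i) is trivial, it remains to prove (i)~$\Longrightarrow$~(iii) and (iii)~$\Longrightarrow$~(ii).

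For (i)~$\Longrightarrow$~(iii): if $T$ is surjective it is a continuous bijection of Banach spaces, hence a topological isomorphism by the open mapping theorem, so its Banach-space adjoint $T^\ast \!: \PM_p(G) \to \PM_p(G)$ — which by the module identity $\langle \omega^{-1} f, x \rangle = \langle f, x\omega^{-1} \rangle$ equals $x \mapsto x\omega^{-1}$ — is a topological isomorphism; in particular it is onto, and any $y \in \PM_p(G)$ with $T^\ast y = 1$ witnesses that $\omega^{-1}$ is left invertible in $\PM_p(G)$.

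For (iii)~$\Longrightarrow$~(ii) I would first upgrade left invertibility to two-sided invertibility inside $\PM_p(G)$. Given $y \in \PM_p(G)$ with $y\omega^{-1} = 1$, the element $e := \omega^{-1} y$ is idempotent and satisfies $e(\omega^{-1} x) = \omega^{-1}(y\omega^{-1})x = \omega^{-1} x$ for every $x \in \PM_p(G)$. By Definition~\ref{pweightinversedef}(a) the set $\{ \omega^{-1} x : x \in \PM_p(G) \}$ is weak$^\ast$ dense in $\PM_p(G)$ (this is exactly the density invoked in the proof of Theorem~\ref{BFTHthm}), while left multiplication by the fixed element $e$ is weak$^\ast$ continuous on $\PM_p(G)$ since the latter is a weak$^\ast$ closed subalgebra of $\mathcal{B}(L^p(G))$; hence $ez = z$ for all $z \in \PM_p(G)$, so $e = 1$ and $y$ is a two-sided inverse of $\omega^{-1}$. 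The module action $f \mapsto yf$ then defines a $p$-completely bounded map $S \!: A_p(G) \to A_p(G)$ (as $A_p(G)$ is a $p$-completely contractive $\PM_p(G)$-module), and $S(Tf) = (y\omega^{-1})f = f$, $T(Sf) = (\omega^{-1} y)f = f$, so $T$ is bijective with $p$-completely bounded inverse $S$; transporting along the $p$-complete isometry (\ref{injective}) gives (ii). (Note that Definition~\ref{pweightinversedef}(b) is not needed.)

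Finally, for discrete $G$: here $1 = \lambda_p(\delta_e) \in \PF_p(G)$, so $\PF_p(G)$ is unital and $\mathcal{M}(\PF_p(G)) = \PF_p(G)$. Using the norm-dense range of $x \mapsto x\omega^{-1}$ on $\PF_p(G)$, pick $x_0$ with $\| x_0 \omega^{-1} - 1 \| < 1$; then $x_0\omega^{-1}$ is invertible in the unital Banach algebra $\PF_p(G)$ by a Neumann series, so $\omega^{-1}$ is left invertible there, and symmetrically, using the dense range of $x \mapsto \omega^{-1} x$, it is right invertible, hence invertible. Thus $\omega^{-1}$ is automatically invertible in $\mathcal{M}(\PF_p(G))$ and (i)--(iii) hold. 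The crux I expect is the upgrade from one-sided to two-sided invertibility in $\PM_p(G)$ — being careful about which weak$^\ast$ topology is in play and that one-sided multiplication is separately weak$^\ast$ continuous — together with the conceptual point that the $p$-complete boundedness demanded in (ii) is not formal from (i) but genuinely uses the algebraic content of (iii) and the module structure of $A_p(G)$.
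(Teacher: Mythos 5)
Your proof is correct and follows the paper's skeleton almost exactly: the duality between $f \mapsto \omega^{-1}f$ and $x \mapsto x\omega^{-1}$ plus the open mapping theorem and unitality of $\PM_p(G)$ for (i) $\Longrightarrow$ (iii), and unitality of $\PF_p(G)$ together with the dense one-sided ideals for the discrete case (your Neumann-series argument is just the "basic Banach algebra theory" the paper invokes, spelled out). The one place you genuinely diverge is (iii) $\Longrightarrow$ (ii), which the paper dismisses as trivial: you first upgrade the left inverse $y$ of $\omega^{-1}$ to a two-sided inverse in $\PM_p(G)$, using that $e = \omega^{-1}y$ is an idempotent acting as the identity on the weak$^\ast$-dense set $\{\omega^{-1}x : x \in \PM_p(G)\}$ and that left multiplication by $e$ is weak$^\ast$ continuous. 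That works, but it is more than is needed: from $y\omega^{-1} = 1$ alone, the module map $f \mapsto yf$ is already a $p$-completely bounded left inverse of $T \colon f \mapsto \omega^{-1}f$, so $T$ is bounded below, and since its range $A_p(G,\omega)$ is dense in $A_p(G)$ by Theorem \ref{BFTHthm}, $T$ is surjective with $p$-completely bounded inverse --- no two-sided invertibility required. What your detour buys is the stronger conclusion that under (i)--(iii) the element $\omega^{-1}$ is actually invertible in $\PM_p(G)$ for every $p$, which the paper only records (by a different, polar-decomposition argument) in the Remark for $p=2$; you are also right that Definition \ref{pweightinversedef}(b) plays no role here.
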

\begin{proof}
(iii) $\Longrightarrow$ (ii) $\Longrightarrow$ (i) hold trivially.
\par
(i) $\Longrightarrow$ (iii): The composition of (\ref{injective}) with the canonical inclusion of $A_p(G,\omega)$ into $A_p(G)$ is 
\[
  A_p(G) \to A_p(G), \quad f \mapsto \omega^{-1} f.
\]
If this map is bijective, then so is
\[
  \PM_p(G) \to \PM_p(G), \quad x \mapsto x \omega^{-1}.
\]
As $\PM_p(G)$ is unital, $\omega^{-1}$ must be left invertible in $\PM_p(G)$.
\par
If $G$ is discrete then, $\PF_p(G)$ is unital, so that $\mathcal{M}(\PF_p(G)) = \PF_p(G)$. From the density of the ranges of (\ref{denserange3}) and (\ref{denserange4}) in $\PF_p(G)$ it is clear that the left ideals $\{ x\omega^{-1} : x\in \PF_p(G) \}$ and $\{ \omega^{-1} x : x \in \PF_p(G) \}$ are both dense in $\PF_p(G)$ and thus, by basic Banach algebra theory, all of $\PF_p(G)$. It follows that $\omega^{-1}$ is invertible in $\PF_p(G)$.
\end{proof}
\begin{remark}
Suppose that $p =2$ and that $\omega^{-1} \in \mathcal{M}(C^\ast_r(G))$ is a weight inverse such that (\ref{rats}) is surjective (and thus, automatically, bijective). As $(\omega^{-1})^\ast = u | (\omega^{-1})^\ast|$ with $u \in \VN(G)$ unitary---see the proof of Proposition \ref{posweight}---, it follows that
\[
  A(G) \mapsto A(G), \quad f \mapsto |( \omega^{-1})^\ast| f
\]
is also bijective. Hence, $|(\omega^{-1})^\ast|$ is left invertible in $\VN(G)$ and, being self-adjoint, actually invertible. This entails that $(\omega^{-1})^\ast$ is invertible in $\VN(G)$ and thus in $\mathcal{M}(C^\ast_r(G))$. In the $p=2$ situation, we thus have the equivalence of:
\begin{items}
\item the inclusion map from $A(G,\omega)$ into $A(G)$ is surjective;
\item the inclusion map from $A(G,\omega)$ into $A(G)$ is surjective and has a completely bounded inverse;
\item $\omega^{-1}$ is invertible in $\mathcal{M}(C^\ast_r(G))$.
\end{items}
\end{remark}
\section{A weighted Leptin--Herz theorem}
It is well known that, for any locally compact group $G$ and any weight $\omega \!: G \to [1,\infty)$, the Beurling algebra $L^1(G,\omega)$ has a bounded approximate identity (\cite[Proposition 3.7.7]{RSt}). On the other hand, H.\ Leptin proved in \cite{Lep} that a locally compact group $G$ is amenable if and only if $A(G)$ has a bounded approximate identity. This result was subsequently extended in to Fig\`a-Talamanca--Herz algebras in \cite{Her2} by C.\ Herz, who claimed this extension to be folklore. 
\par 
In this section, we prove a weighted version of the Leptin--Herz theorem: a locally compact group $G$ is amenable if and only if, for all $p \in (1,\infty)$ and all weight inverses $\omega^{-1} \in \mathcal{M}(\PF_p(G))$, the algebra $A_p(G,\omega)$ has a bounded approximate identity.
\par 
As $A_p(G,\omega)$ is dense in $A_p(G)$ with the inclusion being ($p$-completely) contractive, any bounded approximate identity for $A_p(G,\omega)$ is automatically an approximate identity for $A_p(G)$, thus forcing $G$ to be amenable. If one tries to adapt the proof in the unweighted case---via F{\o}lner type conditions---, difficulties show up immediately: in general, the functions in $A_p(G,\omega)$ with compact support need not be dense in $A_p(G,\omega)$. We thus pursue a different route, which is inspired by the theory of Kac algebras (see \cite{ES}).
\begin{definition} \label{Ppdef}
Let $G$ be a locally compact group, and let $p \in [1,\infty)$. We call a net $(\xi_\alpha )_\alpha$ of non-negative norm one functions in $L^p(G)$ a \emph{$(P_p)$-net} if
\[
  \sup_{x \in K} \| \lambda_p(x) \xi_\alpha - \xi_\alpha \|_p \to 0.
\]
for a compact $K \subset G$.
\end{definition}
\begin{remark}
The choice of terminology in Definition \ref{Ppdef} is, of course, due to property $(P_p)$ introduced by H.\ Reiter (see \cite[Definition 8.3.1]{RSt}). A locally compact group $G$ is amenable if and only if it has Property $(P_p)$ for one---and, equivalently, for all---$p \in [1,\infty)$ (\cite[Theorem 6.14]{Pie}), i.e., there is a $(P_p)$-net in $L^p(G)$.
\end{remark}
\par 
For any $p \in (1,\infty)$, the $(P_p)$-nets are defined in terms of an asymptotic invariance property. For the proof of our weighted Leptin--Herz theorem, we require three more such properties, which we formulate as three lemmas.
\begin{lemma} \label{Pplem}
Let $G$ be an amenable locally compact group, and let $p \in (1,\infty)$. Then:
\begin{items} 
\item the augmentation character $1 \in L^\infty(G)$ on $L^1(G)$ extends uniquely to a multiplicative linear functional on $\mathcal{M}(\PF_p(G))$;
\item for any $(P_p)$-net $( \xi_\alpha )_\alpha$ in $L^p(G)$, we have
\begin{equation} \label{Ppeq}
  \| x \xi_\alpha - \langle x,1 \rangle \xi_\alpha \|_p \to 0 \qquad (x \in \mathcal{M}(\PF_p(G)).
\end{equation}
\end{items}
\end{lemma}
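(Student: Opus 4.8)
The plan is to obtain both parts simultaneously by first establishing (ii) for the dense subalgebra $\lambda_p(L^1(G)) \subset \PF_p(G)$ — where the augmentation character is just $f \mapsto \int_G f$ — and then extending by a density/continuity argument, which will also produce the functional in (i). For the first step, fix a $(P_p)$-net $(\xi_\alpha)_\alpha$ and take $x = \lambda_p(f)$ with $f \in L^1(G)$, say supported in a compact set $K$ (such $f$ are norm-dense in $L^1(G)$). Then $\lambda_p(f)\xi_\alpha = \int_G f(t)\,\lambda_p(t)\xi_\alpha\,dt$ and $\langle \lambda_p(f),1\rangle\xi_\alpha = \big(\int_G f(t)\,dt\big)\xi_\alpha$, so
\[
  \lambda_p(f)\xi_\alpha - \langle\lambda_p(f),1\rangle\xi_\alpha = \int_G f(t)\,\big(\lambda_p(t)\xi_\alpha - \xi_\alpha\big)\,dt,
\]
and hence $\|\lambda_p(f)\xi_\alpha - \langle\lambda_p(f),1\rangle\xi_\alpha\|_p \le \|f\|_1 \sup_{t\in K}\|\lambda_p(t)\xi_\alpha-\xi_\alpha\|_p \to 0$. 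This is the heart of the matter and it is essentially Reiter's computation; the only mildly delicate point is justifying the vector-valued integral estimate, which is routine since $t \mapsto \lambda_p(t)\xi_\alpha$ is continuous and bounded.

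Next I would bootstrap from this to all of $\mathcal{M}(\PF_p(G))$. The key observation is that for any $x \in \mathcal{M}(\PF_p(G))$, the operators $x$ and all $\lambda_p(f)$ are uniformly bounded on $L^p(G)$, and each $\xi_\alpha$ is a unit vector, so $\|x\xi_\alpha - \lambda_p(f)\xi_\alpha\|_p \le \|x - \lambda_p(f)\|_{\PM_p(G)}$ — except that $x$ need not lie in the norm-closure $\PF_p(G)$ of $\lambda_p(L^1(G))$, only in the multiplier algebra. To handle this, I would use an approximate-identity trick: pick an approximate identity $(e_\beta)$ for $L^1(G)$; then $x\lambda_p(e_\beta) \in \PF_p(G)$ (since $x$ is a multiplier) and can be norm-approximated by some $\lambda_p(f_\beta)$, while on the other hand $\lambda_p(e_\beta)\xi_\alpha \to \xi_\alpha$ in $L^p(G)$ along a suitable iterated limit. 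Combining, $x\xi_\alpha$ and $\lambda_p(f_\beta)\xi_\alpha$ become comparable, and since $\langle x\lambda_p(e_\beta),1\rangle = \langle x,1\rangle\langle\lambda_p(e_\beta),1\rangle \to \langle x,1\rangle$ once we know the augmentation character extends, an $\varepsilon/3$-argument gives (\ref{Ppeq}).

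Finally, for part (i): the augmentation character on $L^1(G)$ is $f \mapsto \langle f,1\rangle = \int_G f$; since $G$ is amenable, $1 \in L^\infty(G)$ corresponds to an element of $\PM_p(G)$ (the identity $\lambda_p$ of the trivial representation factors through $\PF_p(G)^{**}$), and one checks it is multiplicative on $\PF_p(G)$, hence extends to a character $\chi$ on $\mathcal{M}(\PF_p(G))$; uniqueness follows because $\lambda_p(L^1(G))$ is dense in $\PF_p(G)$ and $\PF_p(G)$ acts non-degenerately, so any multiplicative extension is determined on multipliers by $\chi(x\lambda_p(f)) = \chi(x)\chi(\lambda_p(f))$. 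The main obstacle I anticipate is precisely this interplay between the three algebras $\PF_p(G) \subset \mathcal{M}(\PF_p(G)) \subset \PM_p(G)$ — making sure the extension of the augmentation character is well-defined and multiplicative on the multiplier algebra, and that the density argument in the previous paragraph converges in the right order of limits. Everything else reduces to Reiter's estimate together with standard approximate-identity manipulations, so I would present those compactly and spend the bulk of the write-up on the character extension in (i).
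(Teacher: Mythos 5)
Your outline follows essentially the same route as the paper: Reiter's estimate for compactly supported $f$, a norm-density extension to $\PF_p(G)$, and a bootstrap to $\mathcal{M}(\PF_p(G))$ via an element of $\PF_p(G)$ on which the augmentation character takes the value $1$. Two remarks. First, the order-of-limits worry you raise in the multiplier step dissolves: you do not need an iterated limit over $\beta$, because a \emph{single} fixed $e_\beta \geq 0$ with compact support and $\int_G e_\beta = 1$ already satisfies $\| \lambda_p(e_\beta)\xi_\alpha - \xi_\alpha \|_p \leq \sup_{t \in \supp e_\beta} \| \lambda_p(t)\xi_\alpha - \xi_\alpha\|_p \to 0$ as $\alpha \to \infty$ by the $(P_p)$-property; then $\| x\xi_\alpha - x\lambda_p(e_\beta)\xi_\alpha\|_p \to 0$ and $\langle x\lambda_p(e_\beta), 1\rangle = \langle x,1\rangle$, which is exactly the paper's argument (with $a = \lambda_p(e_\beta)$). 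Second, the one genuinely under-justified step is the crux of (i): the claim that the augmentation character is bounded for the $\PF_p$-norm, i.e., $|\int_G f| \leq \|\lambda_p(f)\|_{\mathcal{B}(L^p(G))}$ for $f \in L^1(G)$. This is precisely where amenability enters, and your justification (``$1$ corresponds to an element of $\PM_p(G)$'', ``the trivial representation factors through $\PF_p(G)^{\ast\ast}$'') is garbled --- what is needed is an element of $\PF_p(G)^\ast$, not of $\PM_p(G)$. The paper cites Cowling's theorem for this; alternatively, it follows directly from your own Reiter estimate, since $|\int_G f| = \lim_\alpha \|(\int_G f)\xi_\alpha\|_p = \lim_\alpha \|\lambda_p(f)\xi_\alpha\|_p \leq \|\lambda_p(f)\|$ for compactly supported $f$, and then for all $f \in L^1(G)$ by density. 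With that fixed, your multiplicativity and uniqueness arguments for the extension to $\mathcal{M}(\PF_p(G))$ go through as in the paper.
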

\begin{proof}
It follows from \cite[Theorem 5]{Cow} that $1$ extends (necessarily uniquely) to $\PF_p(G)$ as a (necessarily multiplicative) bounded linear functional. Fix $a \in \PF_p(G)$ with $\langle a, 1 \rangle =1$, and define 
\[
  \phi \!: \mathcal{M}(\PF_p(G)) \to \comps, \quad x \mapsto \langle xa, 1 \rangle.
\]
Clearly, $\phi$ is a continuous functional extending $1$. To see that $\phi$ is multiplicative, let $( e_\alpha )_\alpha$ be a bounded approximate identity for $L^1(G)$, so that $(\lambda_p(e_\alpha) )_\alpha$ is a bounded approximate identity for $\PF_p(G)$. We obtain for $x,y \in \mathcal{M}(\PF_p(G))$:
\begin{multline*}
  \langle xy, \phi \rangle = \langle xya, 1 \rangle = \lim_\alpha \langle x \lambda(e_\alpha) ya, 1 \rangle = \lim_\alpha \langle x \lambda(e_\alpha) , 1 \rangle \langle y a , 1 \rangle \\ = \lim_\alpha \langle x \lambda(e_\alpha) a , 1 \rangle \langle ya,1 \rangle = \langle xa, 1 \rangle \langle ya,1 \rangle = \langle x, \phi \rangle \langle y, \phi \rangle.
\end{multline*}
It is obvious that $\phi$ is the only multiplicative extension of $1$ from $\PF_p(G)$ to $\mathcal{M}(\PF_p(G))$ (for the sake of simplicity, we will also denote this extension by $1$). This proves (i).
\par 
For the proof of (ii), first note that (\ref{Ppeq}) holds for $x \in \lambda_p(L^1(G))$: this is due to the fact the the functions with compact support are dense in $L^1(G)$. Due to the norm density of $\lambda(L^1(G))$ in $\PF_p(G)$, we obtain (\ref{Ppeq}) for $x \in \PF_p(G)$ as well. Finally, let $x \in \mathcal{M}(\PF_p(G))$ be arbitrary. Fix $a \in \PF_p(G)$ with $\langle a, 1 \rangle = 1$, so that $\| a \xi_\alpha -  \xi_\alpha \|_p \to 0$ and thus 
\[
  \| x \xi_\alpha - xa \xi_\alpha \|_p \to 0.
\]
As $\langle xa,1 \rangle = \langle x, 1 \rangle$, we obtain
\[
  \| x \xi_\alpha - \langle x,1 \rangle \xi_\alpha \|_p \leq \| x \xi_\alpha - xa \xi_\alpha \|_p + \| xa \xi - \langle xa, 1 \rangle \xi_\alpha \|_p \to 0.
\]
This completes the proof.
\end{proof}
\par 
Let $p \in (1,\infty)$ be arbitrary. As in the case $p=2$, we define $W_p \in \mathcal{B}(L^p(G \times G))$ by letting
\[
  (W_p \boldsymbol{\xi})(x,y) := \boldsymbol{\xi}(x,xy) \qquad (\boldsymbol{\xi} \in L^p(G \times G), \, x,y \in G),
\] 
Like in that case, we have
\begin{equation} \label{comultdef}
  \hat{\Gamma}_p x = W^{-1}_p(x \tensor 1) W_p \qquad (x \in \PM_p(G \times G)).
\end{equation}
Observe also that, if $q \in (1,\infty)$ is dual to $p$, i.e., $\frac{1}{p} + \frac{1}{q} = 1$, then 
\begin{equation} \label{adjoints}
  W_p^\ast = W^{-1}_q \qquad\text{and}\qquad (W_p^{-1})^\ast = W_q.
\end{equation}
\begin{lemma} \label{Wpinv}
Let $G$ be a locally compact group, let $p \in (1,\infty)$, and let $( \xi_\alpha )_\alpha$ be a $(P_p)$-net in $L^p(G)$. Then we have
\begin{equation} \label{asy1}
  \| W_p(\eta \tensor \xi_\alpha) - \eta \tensor \xi_\alpha \|_p \to 0 \qquad (\eta \in L^p(G))
\end{equation}
and
\begin{equation} \label{asy2}
  \| W_p^{-1} (\eta \tensor \xi_\alpha) - \eta \tensor \xi_\alpha \| \to 0 \qquad (\eta \in L^p(G)).
\end{equation}
\end{lemma}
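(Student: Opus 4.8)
The plan is to convert each of (\ref{asy1}) and (\ref{asy2}) into an estimate for an integral over $G$ of $|\eta(x)|^p$ against a left-translation defect of $\xi_\alpha$, and then split that integral into a part over a compact set, controlled by the defining property of a $(P_p)$-net, and a tail, controlled by the integrability of $\eta$. First I would record the pointwise formulas: since $\eta \tensor \xi_\alpha$ is the function $(x,y) \mapsto \eta(x)\xi_\alpha(y)$, the definitions of $W_p$ and $W_p^{-1}$ give $(W_p(\eta \tensor \xi_\alpha))(x,y) = \eta(x)\xi_\alpha(xy)$ and $(W_p^{-1}(\eta \tensor \xi_\alpha))(x,y) = \eta(x)\xi_\alpha(x^{-1}y)$. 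Applying Fubini's theorem and then, for each fixed $x$, the substitutions $y \mapsto x^{\mp 1}y$ together with the left invariance of Haar measure, and using $(\lambda_p(x)\xi_\alpha)(y) = \xi_\alpha(x^{-1}y)$, one obtains
\[
  \|W_p(\eta \tensor \xi_\alpha) - \eta \tensor \xi_\alpha\|_p^p = \int_G |\eta(x)|^p \, \|\lambda_p(x^{-1})\xi_\alpha - \xi_\alpha\|_p^p \, dx
\]
and
\[
  \|W_p^{-1}(\eta \tensor \xi_\alpha) - \eta \tensor \xi_\alpha\|_p^p = \int_G |\eta(x)|^p \, \|\lambda_p(x)\xi_\alpha - \xi_\alpha\|_p^p \, dx.
\]

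Now fix $\varepsilon > 0$. Since $C_c(G)$ is dense in $L^p(G)$, choose $g \in C_c(G)$ with $\|\eta - g\|_p^p < \varepsilon$ and set $K := \supp g \cup (\supp g)^{-1}$, a compact symmetric subset of $G$; then $\int_{G \setminus K} |\eta(x)|^p \, dx \le \|\eta - g\|_p^p < \varepsilon$. On $G \setminus K$ the integrand in either of the two identities above is bounded by $2^p |\eta(x)|^p$, because $\|\xi_\alpha\|_p = 1$, so that part of the integral is at most $2^p \varepsilon$. On $K$ the integrand is at most $\|\eta\|_p^p \, \sup_{x \in K} \|\lambda_p(x)\xi_\alpha - \xi_\alpha\|_p^p$, where for the first identity one uses $K = K^{-1}$ to replace $x^{-1}$ by an element of $K$; this supremum tends to $0$ along $\alpha$ by Definition \ref{Ppdef}. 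Hence $\limsup_\alpha \|W_p^{\pm 1}(\eta \tensor \xi_\alpha) - \eta \tensor \xi_\alpha\|_p^p \le 2^p \varepsilon$, and letting $\varepsilon \to 0$ yields (\ref{asy1}) and (\ref{asy2}).

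I do not anticipate any serious obstacle: the argument is just Fubini's theorem combined with the invariance of Haar measure, plus the elementary fact that an $L^p$-function carries little mass outside a suitable compact set. The only bookkeeping point worth care is that the $(P_p)$-property is needed on $K$ for (\ref{asy2}) and on $K^{-1}$ for (\ref{asy1}), so taking $K$ symmetric from the outset keeps both cases uniform.
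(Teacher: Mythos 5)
Your proof is correct and follows essentially the same route as the paper, which disposes of (\ref{asy1}) by noting it is immediate from Definition \ref{Ppdef} for compactly supported $\eta$ and then invoking density, and deduces (\ref{asy2}) from (\ref{asy1}); your explicit Fubini computation and the splitting of the integral over $K$ and $G \setminus K$ is just that same argument written out in full (with the density step performed by truncation rather than by the triangle inequality and the isometry of $W_p$). The care you take with $K = K^{-1}$ to cover both $\lambda_p(x)$ and $\lambda_p(x^{-1})$ is a worthwhile detail the paper leaves implicit.
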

\begin{proof}
If $\eta$ has compact support, (\ref{asy1}) is immediate from Definition \ref{Ppdef}; the general case follows by the usual density argument. Clearly, (\ref{asy2}) follows from (\ref{asy1}).
\end{proof}
\begin{lemma} \label{asylem}
Let $G$ be a locally compact group, let $p,q \in (1,\infty)$ be dual to each other, let $\omega^{-1} \in \mathcal{M}(\PF_p(G))$, let $\Omega \in \PM_p(G \times G)$ be as in \emph{Definition \ref{pweightinversedef}(iii)}, and let $( \xi_\alpha )_\alpha$ be a $(P_q)$-net in $L^q(G)$. Then we have
\begin{equation} \label{asy3}
  \| \Omega^\ast(\eta \tensor \xi_\alpha) - \eta \tensor (\omega^{-1})^\ast \xi_\alpha \|_q \to 0 \qquad (\eta \in L^p(G)).
\end{equation}
\end{lemma}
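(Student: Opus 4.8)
The plan is to move the defining cocycle identity $\omega^{-1}\tensor\omega^{-1}=(\hat\Gamma_p\omega^{-1})\Omega$ to the Banach-space level by taking adjoints, to test the resulting identity on elementary tensors $\zeta\tensor\xi_\alpha$, and then to use Lemma \ref{Wpinv} — applied with $q$ in place of $p$, since $(\xi_\alpha)_\alpha$ is a $(P_q)$-net — to strip the auxiliary unitaries off asymptotically. (Here $\eta$ is taken in $L^q(G)$, so that $\eta\tensor\xi_\alpha$ lies in $L^q(G\times G)$ and the operator $\Omega^\ast$ on $L^q(G\times G)$ can be applied to it.) Since $\hat\Gamma_p\omega^{-1}=W_p^{-1}(\omega^{-1}\tensor 1)W_p$ by (\ref{comultdef}) and $W_p^\ast=W_q^{-1}$, $(W_p^{-1})^\ast=W_q$ by (\ref{adjoints}), taking adjoints in the cocycle identity — now between operators on $L^q(G\times G)$ — yields
\[
  (\omega^{-1})^\ast\tensor(\omega^{-1})^\ast \;=\; \Omega^\ast\, W_q^{-1}\bigl((\omega^{-1})^\ast\tensor 1\bigr)W_q .
\]

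\textbf{Testing on $\zeta\tensor\xi_\alpha$.} Fix $\zeta\in L^q(G)$ and apply both sides of the last identity to $\zeta\tensor\xi_\alpha$. The left-hand side equals $\bigl((\omega^{-1})^\ast\zeta\bigr)\tensor\bigl((\omega^{-1})^\ast\xi_\alpha\bigr)$ exactly. On the right-hand side I peel the operators off one at a time: Lemma \ref{Wpinv} gives $\|W_q(\zeta\tensor\xi_\alpha)-\zeta\tensor\xi_\alpha\|_q\to 0$; applying the bounded operator $(\omega^{-1})^\ast\tensor 1$ sends this to $\bigl((\omega^{-1})^\ast\zeta\bigr)\tensor\xi_\alpha$ in the limit; applying $W_q^{-1}$ and invoking Lemma \ref{Wpinv} once more for the fixed vector $(\omega^{-1})^\ast\zeta$ preserves that limit; and applying the bounded operator $\Omega^\ast$ then gives
\[
  \bigl\| \Omega^\ast\bigl( ((\omega^{-1})^\ast\zeta)\tensor\xi_\alpha \bigr) - ((\omega^{-1})^\ast\zeta)\tensor((\omega^{-1})^\ast\xi_\alpha) \bigr\|_q \;\longrightarrow\; 0 .
\]
Writing $\eta=(\omega^{-1})^\ast\zeta$, this is precisely (\ref{asy3}) for every $\eta$ in the range of $(\omega^{-1})^\ast\colon L^q(G)\to L^q(G)$.

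\textbf{Passing to all $\eta$.} To remove the restriction, I would argue that $\ran(\omega^{-1})^\ast$ is dense in $L^q(G)$, i.e.\ that $\omega^{-1}$ is injective on $L^p(G)$: if $\omega^{-1}\xi=0$, then $y\xi=0$ for every $y$ in the closure of $\{x\omega^{-1}:x\in\PF_p(G)\}$, which is all of $\PF_p(G)$ by (\ref{denserange3}), so $\xi=\lim_\beta\lambda_p(e_\beta)\xi=0$ for a bounded approximate identity $(\lambda_p(e_\beta))_\beta$ of $\PF_p(G)$ (the image of one of $L^1(G)$, which acts nondegenerately on $L^p(G)$). Since, for each $\alpha$, the map $\eta\mapsto\Omega^\ast(\eta\tensor\xi_\alpha)-\eta\tensor(\omega^{-1})^\ast\xi_\alpha$ is linear with norm at most $\|\Omega\|+\|\omega^{-1}\|$ uniformly in $\alpha$ (as $\|\xi_\alpha\|_q=1$), a routine approximation argument then extends (\ref{asy3}) from this dense subspace to all of $L^q(G)$.

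\textbf{Main obstacle.} The delicate point is the last step: the adjointed cocycle identity controls $\Omega^\ast$ only on tensors of the special shape $\bigl((\omega^{-1})^\ast\zeta\bigr)\tensor\xi_\alpha$, and bridging to an arbitrary $\eta\tensor\xi_\alpha$ forces one to use the density condition in Definition \ref{pweightinversedef}(a) — via injectivity of $\omega^{-1}$ and nondegeneracy of the $\PF_p(G)$-action on $L^p(G)$ — rather than merely the cocycle condition. The interleaving in the middle step of the two applications of Lemma \ref{Wpinv} (for $W_q$ and for $W_q^{-1}$) with the bounded operators $(\omega^{-1})^\ast\tensor 1$ and $\Omega^\ast$ is routine but must be carried out in the correct order.
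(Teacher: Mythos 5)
Your proof is correct and follows essentially the same route as the paper's: take adjoints of the cocycle identity using (\ref{adjoints}) to get $(\omega^{-1})^\ast \tensor (\omega^{-1})^\ast = \Omega^\ast W_q^{-1}((\omega^{-1})^\ast \tensor 1)W_q$, apply Lemma \ref{Wpinv} to obtain (\ref{asy3}) for $\eta$ in the range of $(\omega^{-1})^\ast$, and conclude by density of that range (which the paper asserts without the justification you supply via Definition \ref{pweightinversedef}(a)). Your parenthetical observation that $\eta$ should lie in $L^q(G)$ rather than $L^p(G)$ is also right, and is consistent with how the lemma is later used in Proposition \ref{weakbai}.
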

\begin{proof}
By Definition \ref{pweightinversedef}(iii) and (\ref{comultdef}), we have
\[
  \omega^{-1} \tensor \omega^{-1} = (\hat{\Gamma} \omega^{-1}) \Omega = W_p^{-1}(\omega^{-1} \tensor 1) W_p \Omega.
\]
Through taking adjoints---taking (\ref{adjoints}) into account---, we obtain 
\begin{equation} \label{omegaeq}
  (\omega^{-1})^\ast \tensor (\omega^{-1} )^\ast = \Omega^\ast W_q^{-1}((\omega^{-1})^\ast \tensor 1) W_q.
\end{equation}
As $( \xi_\alpha )_\alpha$ is a $(P_q)$-net, we obtain from Lemma \ref{Wpinv} that
\[
  \| \Omega^\ast W_q^{-1}((\omega^{-1})^\ast \tensor 1) W_q(\eta \tensor \xi_\alpha) - \Omega^\ast((\omega^{-1})^\ast \eta \tensor \xi_\alpha) \|_q \to 0.
\]
In view of (\ref{omegaeq}), this yields (\ref{asy3}) in the case where $\eta \in (\omega^{-1})^\ast L^q(G)$; the general case follows from the fact that $(\omega^{-1})^\ast L^q(G)$ is dense in $L^q(G)$.
\end{proof}
\par 
For our next result, the technical heart of our argument, recall the notion of a \emph{weak approximate identity} of a Banach algebra $A$: this a net $( e_\alpha)_\alpha$ in $A$ such that 
\[
  a e_\alpha \to a \quad\text{and}\quad e_\alpha a \to a \qquad (a \in A)
\]
in the weak topology of $A$ (see, for instance, \cite[Definition 11.3]{BD}). 
\begin{proposition} \label{weakbai}
Let $G$ be a locally compact group, let $( \xi_\alpha )_{\alpha \in \mathcal{A}}$ be a $(P_1)$-net in $L^1(G)$, let $p,q \in (1,\infty)$ be dual to each other, and let the net $( e_\alpha )_{\alpha \in \mathbb{A}}$ in $A_p(G)$ be defined by
\[
  e_\alpha(x) := \left\langle \lambda_p(x) \xi_\alpha^\frac{1}{p}, \xi_\alpha^\frac{1}{q}\right\rangle \qquad (x \in G, \, \alpha \in \mathbb{A}).
\]
Then, if $\omega^{-1} \in \mathcal{M}(\PF_p(G))$ is a weight inverse, the net $\left( \langle \omega^{-1}, 1 \rangle^{-1} \omega^{-1} e_\alpha \right)_{\alpha \in \mathbb{A}}$ in $A_p(G,\omega)$ is a weak approximate identity for $A_p(G, \omega)$.
\end{proposition}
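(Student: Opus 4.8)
The plan is to reduce the claim to a single weak-convergence statement inside $A_p(G)$ and then to extract that statement from a concrete coefficient-function expression by peeling off operators one at a time with the three asymptotic-invariance lemmas. First some preliminaries. Put $c := \langle\omega^{-1},1\rangle^{-1}$; this is legitimate, because $1$ is multiplicative on $\mathcal{M}(\PF_p(G))$ by Lemma \ref{Pplem}(i), so $\langle\omega^{-1},1\rangle = 0$ would give $\langle x\omega^{-1},1\rangle = 0$ for all $x\in\PF_p(G)$ and hence force the non-zero functional $1$ to vanish on the dense subspace $\{ x\omega^{-1} : x\in\PF_p(G) \}$ of $\PF_p(G)$. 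Since $(\xi_\alpha)_\alpha$ is a $(P_1)$-net, the identities $\lambda_r(y)(f^{1/r}) = (\lambda_1(y)f)^{1/r}$ and $|a^{1/r}-b^{1/r}|^r\le|a-b|$ (for $a,b\ge0$) show that $(\xi_\alpha^{1/p})_\alpha$ is a $(P_p)$-net of unit vectors in $L^p(G)$ and $(\xi_\alpha^{1/q})_\alpha$ a $(P_q)$-net of unit vectors in $L^q(G)$; moreover $\langle\xi_\alpha^{1/p},\xi_\alpha^{1/q}\rangle = \int_G\xi_\alpha = 1$. Finally, $A_p(G,\omega)$ is commutative (pointwise multiplication), so it suffices to prove $g f_\alpha\to g$ weakly for each $g\in A_p(G,\omega)$, and the isometric isomorphism $A_p(G)\to A_p(G,\omega)$, $h\mapsto\omega^{-1}h$, identifies $A_p(G,\omega)^\ast$ with $\PM_p(G)$ so that $\langle\omega^{-1}h,\psi_x\rangle = \langle h,x\rangle$ for $x\in\PM_p(G)$, $h\in A_p(G)$.

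Write $g = \omega^{-1}h$ with $h\in A_p(G)$. By (\ref{BFTHprod}), applied with second factor $c\,e_\alpha$, we have $g f_\alpha = \omega^{-1}h_\alpha$, where $h_\alpha := c\,(\hat{\Gamma}_p)_\ast\bigl(\Omega\,\theta_\ast(h\tensor e_\alpha)\bigr)\in A_p(G)$; thus $\langle g f_\alpha - g,\psi_x\rangle = \langle h_\alpha - h,x\rangle$, and it remains to show $\langle h_\alpha,x\rangle\to\langle h,x\rangle$ for every $x\in\PM_p(G)$. Since $\|f_\alpha\|\le c$ uniformly in $\alpha$ and multiplication on $A_p(G,\omega)$ is contractive, a routine three-term approximation together with the density of the coefficient functions $u_{\eta,\zeta}:=\langle\lambda_p(\cdot)\eta,\zeta\rangle$ ($\eta\in L^p(G)$, $\zeta\in L^q(G)$) in $A_p(G)$ reduces this to $h = u_{\eta,\zeta}$, in which case $\langle h,x\rangle = \langle x\eta,\zeta\rangle$.

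For such $h$, using $e_\alpha = u_{\xi_\alpha^{1/p},\xi_\alpha^{1/q}}$, the identification of $\theta_\ast(u_{\eta,\zeta}\tensor u_{\eta',\zeta'})$ with the coefficient functional on $\PM_p(G\times G)$ attached to $\eta\tensor\eta'\in L^p(G\times G)$ and $\zeta\tensor\zeta'\in L^q(G\times G)$, the fact that the module action by $\Omega$ modifies the first of these two vectors, the relation $\langle(\hat{\Gamma}_p)_\ast F,x\rangle = \langle F,\hat{\Gamma}_p x\rangle$, and (\ref{comultdef}), one obtains
\[
  \langle h_\alpha,x\rangle = c\,\bigl\langle W_p^{-1}(x\tensor 1)W_p\,\Omega\,(\eta\tensor\xi_\alpha^{1/p}),\ \zeta\tensor\xi_\alpha^{1/q}\bigr\rangle .
\]
Now I strip the operators off successively, each time noting that one side of the pairing is bounded uniformly in $\alpha$ while the other is perturbed by a vector of norm $o(1)$: transposing $W_p^{-1}$ to the right turns it, by (\ref{adjoints}), into $W_q$ acting on $\zeta\tensor\xi_\alpha^{1/q}$, which Lemma \ref{Wpinv} (for the $(P_q)$-net $(\xi_\alpha^{1/q})_\alpha$) lets us delete; transposing next $x\tensor1$ and then $W_p$ to the right, again by (\ref{adjoints}), produces $W_q^{-1}\bigl((x^\ast\zeta)\tensor\xi_\alpha^{1/q}\bigr)$, which Lemma \ref{Wpinv} again lets us delete; and transposing $\Omega$ to the right leaves $\Omega^\ast\bigl((x^\ast\zeta)\tensor\xi_\alpha^{1/q}\bigr)$, which Lemma \ref{asylem} replaces by $(x^\ast\zeta)\tensor(\omega^{-1})^\ast\xi_\alpha^{1/q}$. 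We are left with
\[
  \langle h_\alpha,x\rangle = c\,\bigl\langle\eta\tensor\xi_\alpha^{1/p},\ (x^\ast\zeta)\tensor(\omega^{-1})^\ast\xi_\alpha^{1/q}\bigr\rangle + o(1) = c\,\langle x\eta,\zeta\rangle\,\langle\omega^{-1}\xi_\alpha^{1/p},\xi_\alpha^{1/q}\rangle + o(1).
\]
By Lemma \ref{Pplem}(ii) applied to $\omega^{-1}\in\mathcal{M}(\PF_p(G))$ and the $(P_p)$-net $(\xi_\alpha^{1/p})_\alpha$, together with $\langle\xi_\alpha^{1/p},\xi_\alpha^{1/q}\rangle = 1$, we get $\langle\omega^{-1}\xi_\alpha^{1/p},\xi_\alpha^{1/q}\rangle\to\langle\omega^{-1},1\rangle = c^{-1}$, whence $\langle h_\alpha,x\rangle\to\langle x\eta,\zeta\rangle = \langle h,x\rangle$, which proves the proposition.

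The main obstacle is organizational rather than conceptual: one must transpose $W_p$, $x$ and $\Omega$ across the $L^p$--$L^q$ duality while respecting the $p\leftrightarrow q$ exchange encoded in (\ref{adjoints}), so that each of Lemmas \ref{Pplem}, \ref{Wpinv} and \ref{asylem} is applied to a net of the exponent it actually requires, and one must check that the perturbation discarded at each step really is $o(1)$ --- which is exactly where the uniform bounds $\|\Omega\|\le1$, $\|W_p^{\pm1}\| = 1$ and $\|\xi_\alpha^{1/p}\|_p = \|\xi_\alpha^{1/q}\|_q = 1$ are used.
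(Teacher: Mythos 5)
Your proof is correct and follows essentially the same route as the paper's: reduce to a single coefficient function $\langle\lambda_p(\cdot)\eta,\zeta\rangle$, unwind the product via (\ref{BFTHprod}) and (\ref{comultdef}), transpose $W_p$, $x$, and $\Omega$ across the $L^p$--$L^q$ duality using (\ref{adjoints}), and dispose of each factor with Lemmas \ref{Wpinv}, \ref{asylem}, and \ref{Pplem}(ii). You even supply two details the paper leaves implicit, namely that $\langle\omega^{-1},1\rangle\neq 0$ and the uniform-boundedness-plus-density argument justifying the reduction to coefficient functions.
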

\begin{proof}
It is clear that $( \langle \omega^{-1}, 1 \rangle^{-1} \omega^{-1} e_\alpha )_{\alpha \in \mathbb{A}}$ is bounded in $A_p(G,\omega)$. Also note that
\[
  (\omega^{-1} e_\alpha)(x) = \left\langle \lambda_p(x) \omega^{-1} \xi_\alpha^\frac{1}{p}, \xi_\alpha^\frac{1}{q}\right\rangle \qquad (x \in G, \, \alpha \in \mathbb{A}).
\]
\par 
Let $f \in A_p(G)$. Without loss of generality, suppose that there are $\eta \in L^p(G)$ and $\zeta \in L^q(G)$ with $\langle \omega^{-1} \eta, \zeta \rangle =1$ such that $f(x) = \langle \lambda_p(x) \eta, \zeta \rangle$ for $x \in G$; this means that $(\omega^{-1}f)(x) = \langle \lambda_p(x) \omega^{-1} \eta, \zeta \rangle$ for $x \in G$.
\par 
There is a canonical complete isomorphism $\kappa \!: \PM_p(G) \to A_p(G,\omega)^\ast$, given by
\[
  \langle \omega^{-1} f, \kappa(x) \rangle = \langle f,x \rangle \qquad (x \in \PM_p(G)). 
\]
Fix $x \in \PM_p(G)$. 
\par 
From the proof of Theorem \ref{BFTHthm}, we see that for $\alpha \in \mathbb{A}$:
\begin{equation} \label{ouch}
  \begin{split}
  \lefteqn{\langle (\omega^{-1} f) \langle\omega^{-1},1\rangle^{-1} \omega^{-1} e_\alpha, \kappa(x) \rangle} & \\ 
  & = \langle\omega^{-1},1\rangle^{-1} \langle \omega^{-1} (\hat{\Gamma}_p)_\ast(\Omega \theta_\ast(f \tensor e_\alpha)), \kappa(x) \rangle \\ & = \langle\omega^{-1},1\rangle^{-1} \langle \Omega \theta_\ast(f \tensor e_\alpha), \hat{\Gamma}_p x \rangle \\
  & = \langle\omega^{-1},1\rangle^{-1} \langle \theta_\ast(f \tensor e_\alpha), (\hat{\Gamma}_p x)\Omega \rangle \\
  & = \langle\omega^{-1},1\rangle^{-1} \langle \theta_\ast(f \tensor e_\alpha), W_p^{-1}(x \tensor 1)W_p \Omega \rangle \\
  & = \langle\omega^{-1},1\rangle^{-1} \left\langle \zeta \tensor \xi_\alpha^\frac{1}{q}, W_p^{-1}(x \tensor 1)W_p \Omega\left(\eta \tensor \xi_\alpha^\frac{1}{p} \right) \right\rangle \\
  & = \langle\omega^{-1},1\rangle^{-1} \left\langle \Omega^\ast W_q^{-1}(x^\ast \tensor 1)W_q\left(\zeta \tensor \xi_\alpha^\frac{1}{q}\right), \left(\eta \tensor \xi_\alpha^\frac{1}{p} \right) \right\rangle.
  \end{split}
\end{equation}
As $( \xi_\alpha )_{\alpha \in \mathbb{A}}$ is a $(P_1)$ net, $\left( \xi_\alpha^\frac{1}{q} \right)_{\alpha \in \mathbb{A}}$ is a $(P_q)$-net. From Lemmas \ref{Wpinv} and \ref{asylem}, we conclude that
\[
  \left\| \Omega^\ast W_q^{-1}(x^\ast \tensor 1)W_q\left(\zeta \tensor \xi_\alpha^\frac{1}{q}\right) - x^\ast \zeta \tensor (\omega^{-1})^\ast \xi_\alpha \right\|_q \to 0
\]
and thus
\begin{multline*}
  \left\langle \zeta \tensor \xi_\alpha^\frac{1}{q}, W_p^{-1}(x \tensor 1)W_p \Omega\left(\eta \tensor \xi_\alpha^\frac{1}{p} \right) \right\rangle - \left\langle \zeta \tensor \xi_\alpha^\frac{1}{q}, x\eta \tensor \omega^{-1} \xi_\alpha^\frac{1}{p}\right\rangle \\ = \left\langle \Omega^\ast W_q^{-1}(x^\ast \tensor 1)W_q\left(\zeta \tensor \xi_\alpha^\frac{1}{q}\right) - x^\ast \zeta \tensor (\omega^{-1})^\ast \xi_\alpha, \eta \tensor \xi_\alpha^\frac{1}{p} \right\rangle \to 0.
\end{multline*}
Together with (\ref{ouch}), this yields 
\begin{equation} \label{moreouch}
  \lim_\alpha \langle (\omega^{-1} f) \langle\omega^{-1},1\rangle^{-1} \omega^{-1} e_\alpha, \kappa(x) \rangle - \langle\omega^{-1},1\rangle^{-1}  \left\langle \zeta \tensor \xi_\alpha^\frac{1}{q}, x\eta \tensor \omega^{-1} \xi_\alpha^\frac{1}{p}\right\rangle = 0.
\end{equation}
\par 
On the other hand, as $( \xi_\alpha )_{\alpha \in \mathbb{A}}$ is a $(P_1)$ net, $\left( \xi_\alpha^\frac{1}{p} \right)_{\alpha \in \mathbb{A}}$ is a $(P_p)$-net, so that 
\[
  \left\| \omega^{-1}\xi_\alpha^\frac{1}{p} - \langle \omega^{-1},1 \rangle \xi_\alpha^\frac{1}{p} \right\|_p \to 0
\]
by Lemma \ref{Pplem}(ii) and thus
\begin{equation} \label{evenmoreouch}
  \left\langle \zeta \tensor \xi_\alpha^\frac{1}{q}, x\eta \tensor \omega^{-1} \xi_\alpha^\frac{1}{p}\right\rangle = \langle f, x \rangle \left\langle \xi_\alpha^\frac{1}{q}, \omega^{-1} \xi_\alpha^\frac{1}{p} \right\rangle \to \langle f, x \rangle \langle \omega^{-1}, 1 \rangle =  \langle \omega^{-1}f, \kappa(x) \rangle \langle \omega^{-1}, 1 \rangle.
\end{equation}
\par
Combined, (\ref{moreouch}) and (\ref{evenmoreouch}) yield
\[
  \lim_\alpha \langle (\omega^{-1} f) \langle\omega^{-1},1\rangle^{-1} \omega^{-1} e_\alpha - f, \kappa(x) \rangle = 0.
\]
As $x \in \PM_p(G)$ was arbitrary, this completes the proof.
\end{proof}
\par 
Summing everything up, we obtain:
\begin{theorem}
The following are equivalent for a locally compact group $G$:
\begin{items}
\item $G$ is amenable;
\item for every $p \in (1,\infty)$ and for every weight inverse $\omega^{-1} \in \mathcal{M}(\PF_p(G))$, the Beurling--Fig\`a-Talamanca--Herz algebra $A_p(G,\omega)$ has a bounded approximate identity;
\item there are $p \in (1,\infty)$ and a weight inverse $\omega^{-1} \in \mathcal{M}(\PF_p(G))$ such that the Beurling--Fig\`a-Talamanca--Herz algebra $A_p(G,\omega)$ has a bounded approximate identity
\end{items}
\end{theorem}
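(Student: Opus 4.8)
The plan is to close the cycle $\text{(i)} \Rightarrow \text{(ii)} \Rightarrow \text{(iii)} \Rightarrow \text{(i)}$; its substantive content has already been isolated in Proposition~\ref{weakbai}. The implication $\text{(ii)} \Rightarrow \text{(iii)}$ is immediate once one notes that weight inverses exist at all: the unit of $\PM_p(G)$ lies in $\mathcal{M}(\PF_p(G))$, has norm $1$, and is trivially a weight inverse (with $\Omega = 1$ in Definition~\ref{pweightinversedef}), for which moreover $A_p(G,\omega) = A_p(G)$. For $\text{(iii)} \Rightarrow \text{(i)}$: by Theorem~\ref{BFTHthm}, $A_p(G,\omega)$ is a dense subalgebra of $A_p(G)$, and since $\| \omega^{-1} \| \leq 1$ the inclusion $A_p(G,\omega) \hookrightarrow A_p(G)$ is contractive; hence a bounded approximate identity for $A_p(G,\omega)$ is bounded in $A_p(G)$ and acts as an approximate identity on the dense subalgebra $A_p(G,\omega)$, which by a routine $3\varepsilon$-argument makes it an approximate identity for all of $A_p(G)$. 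The Leptin--Herz theorem (\cite{Lep}, \cite{Her2}) then forces $G$ to be amenable.

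For $\text{(i)} \Rightarrow \text{(ii)}$, fix $p \in (1,\infty)$ and a weight inverse $\omega^{-1} \in \mathcal{M}(\PF_p(G))$. Since $G$ is amenable it has Reiter's property $(P_1)$ (\cite[Theorem~6.14]{Pie}), so there is a $(P_1)$-net $(\xi_\alpha)_\alpha$ in $L^1(G)$; feeding it into Proposition~\ref{weakbai} produces a bounded net $(u_\alpha)_\alpha$ in $A_p(G,\omega)$ that is a weak approximate identity for $A_p(G,\omega)$. It remains to upgrade a bounded weak approximate identity to a genuine bounded approximate identity. As $A_p(G,\omega)$ is commutative, it suffices to show that for every finite $F \subset A_p(G,\omega)$ and every $\varepsilon > 0$ there is a $u$ in the convex hull of $\{u_\alpha : \alpha\}$ with $\| f u - f \| < \varepsilon$ for all $f \in F$: the origin of the finite product $\prod_{f \in F} A_p(G,\omega)$ lies in the weak closure of $\{(f u_\alpha - f)_{f \in F} : \alpha\}$, hence --- by Mazur's theorem applied to the convex hull of that set --- in its norm closure, and a convex combination $u = \sum_i t_i u_{\alpha_i}$ stays in the linear subspace $A_p(G,\omega)$ with $\| u \| \leq \sup_\alpha \| u_\alpha\|$. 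Indexing the resulting approximants by the pairs $(F,\varepsilon)$ yields a bounded approximate identity for $A_p(G,\omega)$, which is (ii).

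I do not expect any step to be a genuine obstacle, since the asymptotic-invariance computations that convert a $(P_1)$-net into a weak approximate identity are precisely what Proposition~\ref{weakbai} supplies. The one point needing care is the classical passage from a \emph{bounded weak} approximate identity to a \emph{bounded} approximate identity: it relies only on Mazur's theorem together with the observations that the net is bounded, that convex combinations of its members remain in the subalgebra $A_p(G,\omega)$ (because it is a linear subspace of $A_p(G)$), and that commutativity of $A_p(G,\omega)$ collapses the left- and right-sided requirements into one.
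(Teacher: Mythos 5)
Your proposal is correct and follows essentially the same route as the paper: (i)\,$\Rightarrow$\,(ii) via Proposition~\ref{weakbai} followed by the standard upgrade of a bounded weak approximate identity to a bounded approximate identity (the paper simply cites \cite[Proposition 11.4]{BD} where you spell out the Mazur argument), and (iii)\,$\Rightarrow$\,(i) via contractive dense inclusion into $A_p(G)$ plus the unweighted Leptin--Herz theorem. Your extra observation that $1 \in \mathcal{M}(\PF_p(G))$ is a weight inverse, so that (ii)\,$\Rightarrow$\,(iii) is not vacuous, is a small point the paper leaves implicit.
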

\begin{proof}
(i) $\Longrightarrow$ (ii): Let $ p \in (1,\infty)$, and let $\omega^{-1} \in \mathcal{M}(\PF_p(G))$ be a weight inverse. As $G$ is amenable, it has Reiter's property $(P_1)$ (\cite[Proposition 6.12]{Pie}), i.e., there is a $(P_1)$-net in $L^1(G)$. By Proposition \ref{weakbai}, this means that $A_p(G,\omega)$ has a weak bounded approximate identity. By a standard Banach algebra result (see \cite[Proposition 11.4]{BD}, for example), this means that $A_p(G,\omega)$ already has a bounded approximate identity.
\par
(ii) $\Longrightarrow$ (iii) is trivial.
\par 
(iii) $\Longrightarrow$ (i): As we remarked at the beginning of this section, the existence of a bounded approximate identity for $A_p(G,\omega)$ already implies the existence of one for $A_p(G)$. By the unweighted Leptin--Herz theorem (\cite[Theorem 10.4]{Pie}), this means that $G$ is amenable.
\end{proof}
\renewcommand{\baselinestretch}{1.0}
\renewcommand{\baselinestretch}{1.2}
\begin{tabbing}
\textit{Second author's address}: \= Department of Mathematical and Statistical Sciences \kill 
\textit{First author's address}: \> Department of Mathematics \\
\> Faculty of Science \\
\> Istanbul University \\ 
\> Istanbul \\
\> Turkey \\[\medskipamount]
\textit{E-mail}: \> \texttt{oztops@istanbul.edu.tr} \\[\bigskipamount]
\textit{Second author's address}: \> Department of Mathematical and Statistical Sciences \\
\> University of Alberta \\ 
\> Edmonton, Alberta \\
\> Canada T6G 2G1 \\[\medskipamount]
\textit{E-mail}: \> \texttt{vrunde@ualberta.ca} \\[\bigskipamount]
\textit{Third author's address}: \> Department of Pure Mathematics \\
\> University of Waterloo \\
\> Waterloo, ON \\
\> Canada N2L 3G1 \\[\medskipamount]
\textit{E-mail}: \> \texttt{nspronk@math.uwaterloo.ca} 
\end{tabbing}   
\dated        

\begin{thebibliography}{L--S--T} \begin{small}
%
\bibitem[B--D]{BD} \textsc{F.\ F.\ Bonsall} and \textsc{J.\ Duncan}, \textit{Complete Normed Algebras}. Ergebnisse der Mathematik und ihrer Grenzgebiete \textbf{80}, Springer Verlag, 1973.
%
\bibitem[Cow]{Cow} \textsc{M.\ Cowling}, An application of Littlewood--Paley theory in harmonic analysis. \textit{Math.\ Ann.}\ \textbf{241} (1979), 83--96.
%
\bibitem[Daw]{Daw} \textsc{M.\ Daws}, $p$-operator spaces and Fig\`a-Talamanca–-Herz algebras. \textit{J.\ Operator Theory} \textbf{63} (2010), 47-–83.
%
\bibitem[E--R]{ER} \textsc{E.\ G.\ Effros} and \textsc{Z.-J.\ Ruan}, \textit{Operator Spaces}. London Mathematical Society Monographs (New Series) \textbf{23}, Clarendon Press, 2000.
%
\bibitem[E--S]{ES} \textsc{M.\ Enock} and \textsc{J.-M.\ Schwartz}, \textit{Kac Algebras and Duality of Locally Compact Groups}. Springer Verlag, 1992.
%
\bibitem[Eym 1]{Eym} \textsc{P.\ Eymard}, L'alg\`ebre de Fourier d'un groupe localement compact. \textit{Bull.\ Soc.\ Math.\ France} \textbf{92} (1964), 
181--236. 
%
\bibitem[Eym 2]{Eym2} \textsc{P.\ Eymard}, Alg\`ebres $A_p$ et convoluteurs de $L^p$. In: \textit{S\'eminaire Bourbaki, vol.\ 1969/70, Expos\'es 364--381}, Lecture Notes in Mathematics \textbf{180}, Springer Verlag, 1971. 
%
\bibitem[F-T]{FT} \textsc{A.\ Fig\`a-Talamanca}, Translation invariant operators in $L^p$. \textit{Duke Math.\ J.}\ \textbf{32} (1965) 495--501.
%
\bibitem[Her 1]{Her1} \textsc{C.\ Herz}, The theory of $p$-spaces with an application to convolution operators. \textit{Trans.\ Amer.\ Math.\ Soc.}\ \textbf{154} (1971), 69--82.
%
\bibitem[Her 2]{Her2} \textsc{C.\ Herz}, Harmonic synthesis for subgroups. \textit{Ann.\ Inst.\ Fourier (Grenoble)} \textbf{23} (1973), 91--123.
%
\bibitem[Kan]{Kan} \textsc{E.\ Kaniuth}, \textit{A Course in Commutative Banach Algebras}. Graduate Texts in Mathematics \textbf{246}, Springer Verlag, 2009.
%
\bibitem[Lep]{Lep} \textsc{H.\ Leptin}, Sur l'alg\`ebre de Fourier d'un groupe localement compact. \textit{C.\ R.\ Acad.\ Sci.\ Paris\/}, S\'er.\ A \textbf{266} (1968), 1180--1182.
%
\bibitem[L--S]{LS} \textsc{H.\ H.\ Lee} and \textsc{E.\ Samei}, Beurling--Fourier algebras, operator amenability, and Arens regularity. \textit{J.\ Funct.\ Anal.}\ \textbf{262} (2012), 167--209.
%
\bibitem[L--S--T]{LST} \textsc{J.\ Ludwig}, \textsc{N.\ Spronk}, and \textsc{L.\ Turowska}, Beurling--Fourier algebras on compact groups: spectral theory. \textit{J.\ Funct.\ Anal.}\ \textbf{262} (2012), 463--499,
%
\bibitem[Pie]{Pie} \textsc{J.-P.\ Pier}, \textit{Amenable Locally Compact Groups}. Wiley-Interscience, 1984.
%
\bibitem[R--St]{RSt} \textsc{H.\ Reiter} and \textsc{J.\ D.\ Stegeman}, \emph{Classical Harmonic Analysis and Locally Compact Groups}. London Mathematical Society Monographs (New Series) \textbf{22}, Clarendon Press, 2000.
%
\bibitem[Rud]{Rud} \textsc{W.\ Rudin}, \textit{Fourier Analysis on Groups}. Wiley Classics Library, John Wiley \& Sons, 1990.
%
\bibitem[Spe]{Spe} \textsc{R.\ Spector}, Sur la structure locale des groupes ab\'eliens localement compacts. \textit{Bull.\ Soc.\ Math.\ France Suppl.\ M\'em.}\ \textbf{24} (1970).
%
\bibitem[Tak]{Tak} \textsc{M.\ Takesaki}, \textit{Theory of Operator Algebras}, I. Encyclopedia of Mathematical Sciences \textbf{124},
Springer Verlag, 2003.
%
\bibitem[Yos]{Yos} \textsc{K.\ Yosida}, \textit{Functional Analysis}. Grundlehren der mathematischen Wissenschaften \textbf{123}, Springer Verlag, 1980.
%
\end{small} \end{thebibliography}
\end{document}